\def\hook{\upharpoonright}
\def\forces{\Vdash}
\def\P{\mathbb P}
\setlist[enumerate,1]{label={(\Alph*)}}
\setlist[enumerate,2]{label={(\alph*)}}
\setlist[enumerate,3]{label={$\bullet_{\arabic*}$}}
\newtheorem{theorem}{Theorem}[section]
\newtheorem{claim}[theorem]{Claim}
\newtheorem{lemma}[theorem]{Lemma}
\newtheorem{proposition}[theorem]{Proposition}
\theoremstyle{definition}
\newtheorem{definition}[theorem]{Definition}
\theoremstyle{remark}
\newtheorem{question}[theorem]{Question}
\newtheorem{remark}[theorem]{Remark}
\newcommand{\cf}{\mathrm{cf}}
\newcommand{\CH}{\mathrm{CH}}
\newcommand{\dom}{\mathrm{dom}}
\newcommand{\GCH}{\mathrm{GCH}}
\def\mathunderaccent#1#2 {\let\theaccent#1\skewfactor#2
\mathpalette\putaccentunder}
\def\putaccentunder#1#2{\oalign{$#1#2$\crcr\hidewidth
\vbox to.2ex{\hbox{$#1\skew\skewfactor\theaccent{}$}\vss}\hidewidth}}
\newbox\noforkbox \newdimen\forklinewidth
\noforkbox\hbox{\box1\box0\relax}
\def\unionstick{\mathop{\copy\noforkbox}\limits}
\def\nonfork#1#2_#3{#1\unionstick_{\textstyle #3}#2}
\def\nonforkin#1#2_#3^#4{#1\unionstick_{\textstyle #3}^{\textstyle
    #4}#2}
\newbox\doesforkbox
\doesforkbox\hbox{\box1\box0\relax}
\def\nunionstick{\mathop{\copy\doesforkbox}\limits}
\def\fork#1#2_#3{#1\nunionstick_{\textstyle #3}#2}
\def\forkin#1#2_#3^#4{#1\nunionstick_{\textstyle #3}^{\textstyle
    #4}#2}
\newcommand{\stickT}{%
\setbox255=\hbox{\raise1ex\hbox{$\hspace{0.2pt}\,\bullet\,$}}
\mathord{\rlap{\hbox to\wd255{\hss\hbox{$|$}\hss}}
\box255}
}
\newcommand{\stickS}{%
\setbox255=\hbox{\raise0.6ex\hbox{$\scriptstyle\bullet$}}
\mathord{\rlap{\hbox to\wd255{\hss\hbox{$\scriptstyle|$}\hss}}
\box255}
}
\author[P. Marun]{Pedro Marun}
\address[P. Marun]{
Institute of Mathematics, 
Czech Academy of Sciences, 
{\v Z}itn{\'a} 25, Prague 1, 
115 67, Czech Republic
}
\urladdr{https://pedromarun.github.io}
\author[S. Shelah]{Saharon Shelah}
\address[S. Shelah]{Einstein Institute of Mathematics,
The Hebrew University of Jerusalem,
9190401, Jerusalem, Israel; and\\
Department of Mathematics,
Rutgers University,
Piscataway, NJ 08854-8019, USA}
\urladdr{https://shelah.logic.at/}
\author[C. B. Switzer]{Corey Bacal Switzer}
\address[C. B. Switzer]{Kurt G\"{o}del Research Center, Faculty of Mathematics, University of Vienna, Kolingasse 14 -- 16, 1090 Wien, Austria}
\urladdr{}
\thanks{First typed 2025-11-24.
The second author is grateful to Craig Falls for generously funding typing services that were used during the work on the paper. Research of the second author was partially supported by the Israel Science Foundation (ISF) grant no: 2320/24 (2023-2027) and by the grant ``Independent Theories'' NSF-BSF, (BSF 3013005232). The first author was supported by the Czech Academy of Sciences (RVO 67985840). The research of the third author was funded in whole or in part by the Austrian Science Fund (FWF) grant doi 10.55776/ESP548. For open access purposes, the author has applied a CC BY public copyright license to any author-accepted manuscript version arising from this submission. References like [Sh:950, Th0.2=Ly5] mean that the internal label of Th0.2 is y5 in Sh:950.
The reader should note that the version on S. Shelah's website is usually more up-to-date than the one in arXiv.
This is publication number    
1270
in Saharon Shelah's list.
}
\subjclass[2020]{03E05, 03E35}
\keywords{Cohen reals, forcing, singular cardinals}
\date{November 24th, 2025} % add date of last edit here, do not use \today
\renewcommand{\epsilon}{\varepsilon}
\newcommand{\restr}{\mathord{\upharpoonright}}
\newcommand{\seq}[1]{{\langle{#1}\rangle}}
\newcommand{\F}{\mathcal{F}}
\newcommand{\Add}{\mathrm{Add}}
\newcommand{\B}{\mathbb{B}}
\newcommand{\C}{\mathbb{C}}
\newcommand{\ro}{\mathrm{ro}}
\newcommand{\A}{\mathbb{A}}
\title{Adding $\aleph_\omega$ Many Cohen Reals}
\begin{document}
\begin{abstract}
    Abstractly, the generic extensions after $\aleph_\omega$-many Cohen reals and $\aleph_{\omega+1}$-many Cohen reals must be different for reasons of uniform density the relevant Boolean algebras. Nevertheless this is not satisfying and it would be nice to pin the difference between the two models down to some mathematical or combinatorial principle. In this paper we provide such a principle. 
\end{abstract}
\maketitle

\setcounter{section}{-1}
\section{Introduction}
%Hello. Test-citations: \cite{Sh:a} \cite{Sh:19} \cite{Sh:377}
%\begin{enumerate}
%    \item Level 1
%    \begin{enumerate}
%        \item Level 2
%        \begin{enumerate}
%            \item Level3
%        \end{enumerate}
%    \end{enumerate}
%\end{enumerate}

Let $\Add(\omega,\aleph_\omega)$ be the poset for adding $\aleph_{\omega}$-many Cohen reals . In $V^\P$, the continuum must be at least $\aleph_{\omega}$. By K\"onig's Lemma, the continuum has uncountable cofinality, hence it must be at least $\aleph_{\omega+1}$. One can then ask whether this is different from adding $\aleph_{\omega+1}$-many Cohen reals, i.e. forcing with $\Add(\omega,\aleph_{\omega+1})$. More precisely, if $G$ is $\Add(\omega,\aleph_\omega)$-generic over $V$, is there an $H$ which is $\Add(\omega,\aleph_{\omega+1})$-generic over $V$ such that $V[G]=V[H]$. The (folklore) answer to this is ``No", and in fact $V[G]$ doesn't even contain a filter which is $\Add(\omega,\aleph_{\omega+1})$-generic over $V$,  Proposition \ref{density prop} below. However, the proof of this is somewhat unsatisfying, as it relies on abstract forcing considerations involving the (uniform) densities of the complete Boolean algebras. It is therefore desirable to exhibit a combinatorial or ``mathematical" principle which distinguishes the two models. The purpose of this note is to provide one.

%The generic extensions by adding $\aleph_\omega$ Cohen reals and adding $\aleph_{\omega + 1}$ Cohen reals must be different simply because the uniform density of the Boolean completions is distinct\footnote{This is elaborated on below, see Proposition \ref{density prop}.} hence one cannot completely embed the algebra for adding $\aleph_{\omega+1}$ many Cohen reals into the algebra for adding $\aleph_\omega$ many - even when restricting to dense sets below some condition on each side. This is not satisfying however as one would like a combinatorial or mathematical principle which separates the two models. The purpose of this note is to provide one.

\begin{definition} \label{maindef}
    Let $\sigma \leq \theta < \mu < \lambda$ be cardinals. The statement $\mathsf{Pr}(\sigma, \theta, \mu, \lambda)$ states that there is a $\overline{\F}$ satisfying the following:

    \begin{enumerate}
        \item $\overline{\F} = \langle \F_c : c \in [\lambda]^\sigma\rangle$ is a sequence so that for all $c \in [\lambda]^\sigma$ we have $\F_c \subseteq \mathcal{P}(c)$.
        \item For all $c \in [\lambda]^\sigma$ we have that $\F_c$ has cardinality at most $\theta$.
        \item For every $A \in [\lambda]^\lambda$ there is a sequence $ \langle B_i : i < \mu\rangle$ so that 
        \begin{enumerate}
            \item $A = \bigcup_{i<\mu}B_i$ and
            \item $B_i \cap c \in \F_c$ for all $c \in [\lambda]^\sigma$ and $i < \mu$.
        \end{enumerate}
    \end{enumerate}
\end{definition}

We will show the following.
\begin{theorem} \label{main}
    Assume $\GCH$. Let $\P_0=\Add(\omega,\aleph_\omega)$ and let $\P_1=\Add(\omega,\aleph_{\omega+1})$. The following hold:

    \begin{enumerate}
        \item For all $n < \omega$ $\forces_{\P_0} \mathsf{Pr}(\aleph_n, \aleph_{n+1}, \aleph_\omega, \aleph_{\omega+1})$
        \item For all $n < \omega$ $\forces_{\P_1} \neg \mathsf{Pr}(\aleph_n, \aleph_{n+1}, \aleph_\omega, \aleph_{\omega+1})$
    \end{enumerate}
\end{theorem}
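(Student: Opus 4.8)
\textbf{Part (1).} The plan is to exploit that $\P_0=\Add(\omega,\aleph_\omega)$ is the increasing union of the posets $\P_0^{(m)}:=\Add(\omega,\aleph_m)$, $m<\omega$, each of size $<\aleph_\omega$. First I would prove the \emph{covering lemma}: in $V^{\P_0}$ every $A\in\mathcal P(\aleph_{\omega+1})$ is a union of at most $\aleph_\omega$ subsets of $A$ lying in $V$. To see this, fix a nice $\P_0$-name $\langle a_\alpha:\alpha<\aleph_{\omega+1}\rangle$ for $A$ (each $a_\alpha$ a countable antichain); splitting conditions by the least $m$ with support in $\aleph_m\times\omega$ yields, for each $m$, a $\P_0^{(m)}$-name $\dot A_m$ with $\Vdash A=\bigcup_m A_m$ and $A_m\subseteq A$. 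A nice name over $\P_0^{(m)}$ for $A_m$ is a sequence of ``columns'' from $[\P_0^{(m)}]^{\le\aleph_0}$, of which there are only $\aleph_m^{\aleph_0}\le\aleph_{m+1}$ under $\GCH$; grouping the coordinates of $\aleph_{\omega+1}$ by their column value writes $A_m$ as the union of the (at most $\aleph_{m+1}$) ground-model sets whose column is realized by $G^{(m)}$, each contained in $A_m$. Summing over $m$ gives the claim. Second, I would note that in $V^{\P_0}$ each $c\in[\aleph_{\omega+1}]^{\aleph_n}$ is contained in some $D\in[\aleph_{\omega+1}]^{\aleph_n}\cap V$ (collect the values appearing in a nice name for an enumeration of $c$). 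Now define in $V^{\P_0}$
$\F_c:=\{\,S\cap c:S\subseteq\aleph_{\omega+1},\ S\in V\,\}$.
Clause (A) of Definition~\ref{maindef} is immediate; for (B), writing $S\cap c=(S\cap D)\cap c$ with $D\supseteq c$, $D\in V$ as above, we get $|\F_c|\le|\mathcal P(D)\cap V|=(2^{\aleph_n})^V=\aleph_{n+1}$ by $\GCH$; and (C) follows from the covering lemma, taking the $B_i$ to be the (ground-model, $\subseteq A$) covering sets, padded by $\emptyset$ to length $\aleph_\omega$.

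\textbf{Part (2).} Suppose towards a contradiction that $p\Vdash_{\P_1}$``$\dot{\overline{\F}}$ witnesses $\mathsf{Pr}(\aleph_n,\aleph_{n+1},\aleph_\omega,\aleph_{\omega+1})$''. Since $\aleph_\omega<\cf(\aleph_{\omega+1})$, any decomposition $A=\bigcup_{i<\aleph_\omega}B_i$ of a set of size $\aleph_{\omega+1}$ has some $B_i$ of size $\aleph_{\omega+1}$, and that $B_i$ lies in $\mathcal G:=\{B\subseteq\aleph_{\omega+1}:B\cap c\in\F_c\text{ for all }c\in[\aleph_{\omega+1}]^{\aleph_n}\}$ (we may moreover trim $\F_c$ to $\{B'\cap c:B'\in\mathcal G\}$, so that $\mathcal G$ becomes ``closed''). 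Hence it suffices to build, below $p$, a name $\dot A$ for a set of size $\aleph_{\omega+1}$ admitting no $B\in\mathcal G$ with $B\subseteq A$ and $|B|=\aleph_{\omega+1}$. Fix in $V$ a partition $\aleph_{\omega+1}=\bigsqcup_{\zeta<\aleph_{\omega+1}}I_\zeta$ with $|I_\zeta|=\aleph_n$. For each $\zeta$ let $X_\zeta\subseteq\aleph_{\omega+1}\times\omega$ be the support (an element of $V$) of a nice name for the $I_\zeta$-component of $\dot{\overline{\F}}$; as this names an $\le\aleph_{n+1}$-sized family of subsets of an $\aleph_n$-sized set, $|X_\zeta|\le\aleph_{n+1}<\aleph_{\omega+1}$, so fix in $V$ an injection $e_\zeta:I_\zeta\to(\aleph_{\omega+1}\times\omega)\setminus X_\zeta$, and let $\dot A$ name $\{\alpha\in I_\zeta:\dot G(e_\zeta(\alpha))=1\}$ (union over $\zeta$). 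A density argument gives $p\Vdash|\dot A|=\aleph_{\omega+1}$, and by mutual genericity $p$ forces that $A\cap I_\zeta$ is Cohen-generic over a submodel of $V[G]$ containing $\F_{I_\zeta}$.

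Now, arguing in $V[G]$ below $p$, suppose $B\in\mathcal G$, $B\subseteq A$, $|B|=\aleph_{\omega+1}$, and put $E_B:=\{\zeta:B\cap I_\zeta\ne\emptyset\}$, so $|E_B|=\aleph_{\omega+1}$. For $\zeta\in E_B$ we have $\emptyset\ne B\cap I_\zeta\in\F_{I_\zeta}$ and $B\cap I_\zeta\subseteq A\cap I_\zeta$; since $A\cap I_\zeta$ is Cohen-generic over a model containing $\F_{I_\zeta}$, it avoids every infinite member of $\F_{I_\zeta}$ (each is omitted along a dense set of conditions), whence $B\cap I_\zeta$ is a finite nonempty member of $\F_{I_\zeta}$. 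So $B$ is, up to a finite-to-one perturbation, a transversal of $\aleph_{\omega+1}$-many blocks. The remaining (and main) step is to rule this out: here one applies the ``window-thinness'' of $\mathcal G$ — for every $c\in[\aleph_{\omega+1}]^{\aleph_n}$ the set $\{B'\cap c:B'\in\mathcal G\}$ has size $\le\aleph_{n+1}$, whereas $2^{\aleph_n}=\aleph_{\omega+1}$ in $V^{\P_1}$ — to $\aleph_n$-sized \emph{transversal} sets $c=\{\gamma_\zeta:\zeta\in E\}$ extracted from $B$, and combines it with the closedness of $\mathcal G$ and the genericity of $A$ (e.g.\ iterating density on $A$ minus previously found transversals) to reach a contradiction.

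\textbf{Where the difficulty lies.} Part (1) is routine ccc bookkeeping. The content of Part (2) is concentrated in the last step just described: because the catalog $\F_c$ is permitted to contain every singleton of $c$, the family $\mathcal G$ can contain transversal-like large sets, so a one-block-at-a-time genericity argument does not close the gap, and one must use the more global combination of window-thinness along transversals with the closure of $\mathcal G$. A related point one must respect is that a witness $\overline{\F}$ necessarily involves all $\aleph_{\omega+1}$ coordinates of $\P_1$ (its index set $[\aleph_{\omega+1}]^{\aleph_n}$ already does), so one cannot factor it through a small subforcing; this is exactly why $\dot A$ is built coordinate-by-coordinate against the ground-model supports $X_\zeta$ rather than against $\overline{\F}$ itself.
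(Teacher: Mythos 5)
Your Part (1) is correct and is essentially the paper's Lemma \ref{addingfew}: cover each $c$ by a ground-model set of the same size (ccc, i.e.\ the $\aleph_n^+$-covering property), let $\F_c$ consist of all ground-model traces on $c$, and decompose an arbitrary $A$ into at most $\aleph_\omega$ ground-model subsets of $A$; the paper indexes these pieces simply by the conditions $q\in G$ (via $B_q=\{\xi: q\forces \xi\in\dot A\}$) rather than by nice-name columns, but the computation is the same.

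Part (2) has a genuine gap, located exactly where you stop arguing. Your reduction (some piece must have size $\aleph_{\omega+1}$ since $\aleph_\omega<\cf(\aleph_{\omega+1})$) and your block analysis (any $B\in\mathcal G$ with $B\subseteq A$ meets each ground-model block $I_\zeta$ in a finite set, by genericity of $A\cap I_\zeta$ over a model containing $\F_{I_\zeta}$) are fine, but the ``remaining (and main) step'' is not an argument, and the plan you sketch does not close it. To contradict clause (B) of Definition \ref{maindef} you need \emph{more than} $\aleph_{n+1}$ pairwise distinct traces $B'\cap c$ on a \emph{single} window $c$, where every $B'$ involved is forced to have its trace in $\F_c$. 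Your construction works with one set $A$ and one purported large piece $B$, so on each window you exhibit exactly one trace; ``window-thinness'' then rules out nothing, since $\F_c$ is perfectly free to contain $c$ itself (so transversal-like $B$'s survive counting). Genericity cannot substitute for the missing count either: the windows you propose are transversals extracted from $B$, hence new sets, and, as you yourself note, $\F_c$ for such $c$ cannot be captured in a small side model fixed in advance, so no density argument over a fixed model is available, and ``iterating density on $A$ minus previously found transversals'' has no precise content against a family $\mathcal G$ defined only in the full extension. This is precisely where the paper's proof (Lemma \ref{addingmany}) does something structurally different: it diagonalizes against $\theta^+=\aleph_{n+2}$ many sets $\dot A_\xi$ simultaneously, read off disjoint coordinate blocks $\theta^+\cdot\alpha+\xi$ of the generic; it picks conditions $p_{\xi,\alpha}\forces \check\alpha\in\dot B_{\xi,i_{\xi,\alpha}}$ and then, by repeated pigeonhole, $\Delta$-system and Fodor-type homogenizations, produces a \emph{ground-model} window $c$ of size $\aleph_n$ and a single condition $p^*$ forcing that $\aleph_{n+2}$ many of the traces $\dot B_{\xi,i_\xi}\cap c$ are pairwise distinct, the distinctness being enforced by an explicit amalgamation of conditions that flips the bit at $\theta^+\cdot\beta+\xi_2$. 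Without an analogue of this many-sets, ground-model-window mechanism, your Part (2) remains unproved.
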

Thus the above says that in particular $\mathsf{Pr}(\aleph_0, \aleph_1, \aleph_\omega, \aleph_{\omega+1})$ is the desired principle. In fact Theorem \ref{main} holds in more generality - namely we can take $\mu$ to be a strong limit cardinal of countable cofinality, and $\sigma < \mu$ infinite and we get that adding $\mu$ many Cohen reals forces $\mathsf{Pr}(\sigma, 2^\sigma, \mu, \mu^+)$ while adding $\mu^+$-many Cohen reals forces $\neg \mathsf{Pr}(\sigma, 2^\sigma, \mu, \mu^+)$. These two pieces are proved below as Lemmas \ref{addingfew} and \ref{addingmany} respectively.

The rest of this paper is organized as follows. In the next section we make some preliminary observations about adding $\aleph_\omega$ versus adding $\aleph_{\omega+1}$ Cohen reals. In Section \ref{Sectionfew} we prove the generalization of Item (1) of Theorem \ref{main} while in Section \ref{sectionmany} we prove the generalization of Item (2) of Theorem \ref{main}. The paper concludes with some questions for further research. Throughout our notation is mostly standard, conforming e.g. to that of \cite{jechSetTheory2003} or \cite{kunenSetTheory2011}. Those texts are referred to for any undefined terms. 

\section{Some Preliminary Observations}

If $\kappa$ is an infinite cardinal, we let $\Add(\omega,\kappa)$ be the poset for adding $\kappa$-many Cohen reals. More precisely, $\Add(\omega,\kappa)$ is the set of finite partial functions $\kappa \rightharpoonup 2$, ordered by reverse inclusion. We will argue that, if $\kappa<\lambda$, then a generic extension of $V$ by $\Add(\omega,\kappa)$ does not contain a filter that is $\Add(\omega,\lambda)$-generic over $V$. This result is folklore but a proof is hard to find in the published literature, we will follow the outline in \cite{MOanswer}.

We collect some facts from the elementary theory of Boolean algebras. If $\B$ is a Boolean algebra and $b\in\B$, then $\B_b=\{c\in \B:c\le b\}$, which can be made into a Boolean algebra (with top element $b$) in an obvious way. We let $\B^+=\B\setminus\{0\}$. If $\A$ is a subalgebra of $\B$, then $\A$ is said to be a \emph{regular subalgebra of $\B$} iff for all $X\subseteq \A$, if $\bigvee^\A X$ exists, then so does $\bigvee^\B X$ and moreover $\bigvee^\A X=\bigvee^\B A$. If $\B$ is complete, we will say that $\A\subseteq\B$ is a \emph{complete subalgebra} if it is a regular subalgebra of $\B$ that is also complete as a Boolean algebra.

We will need the following cardinal function:

\begin{definition}
If $\B$ is a Boolean algebra, we define its \emph{density} by
\[
d(\B):=\min\{|D|: D \text{ is dense in }\B\},
\]
where $D$ is \emph{dense} in $\B$ iff $D\subseteq \B^+$ and for all $b\in \B^+$ there exists $d\in D$ with $d\le b$.

We say that $\B$ has \emph{uniform density} iff $d(\B)=d(\B_b)$ for all $b\in\B^+$.
\end{definition}

Unlike the topological notion, the Boolean algebraic density is nicely monotone:

\begin{lemma}[folklore?]\label{densitySub}
Let $\A$ be a complete subalgebra of a complete Boolean algebra $\B$. Then $d(\A)\le d(\B)$.
\end{lemma}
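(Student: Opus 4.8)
The plan is to start from a dense subset $D\subseteq\B^+$ of minimal size $d(\B)$ and push it into $\A$ via the canonical projection, producing a dense subset of $\A$ of size at most $|D|$.

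First I would extract the only consequence of regularity that the argument needs. Since $\A$ is complete, $\bigvee^{\A}X$ exists for every $X\subseteq\A$, so the definition of regular subalgebra gives $\bigvee^{\B}X=\bigvee^{\A}X$ for all $X\subseteq\A$; dualizing via complements (which are computed identically in $\A$ and $\B$, as $\A$ is a subalgebra sharing $0$ and $1$), we also get $\bigwedge^{\B}X=\bigwedge^{\A}X$ for all $X\subseteq\A$. Using this, for each $b\in\B^+$ the element $\pi(b):=\bigwedge^{\A}\{a\in\A:b\le a\}$ is well defined ($\A$ is complete and the set contains $1$), and, being the $\B$-infimum of a family of elements each $\ge b$, it satisfies $b\le\pi(b)$; in particular $\pi(b)\in\A^+$. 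Two further elementary observations I would record: $\pi$ is monotone, since $b\le b'$ shrinks the set $\{a\in\A:b'\le a\}$ inside $\{a\in\A:b\le a\}$ and hence raises the infimum, and $\pi(a)=a$ for every $a\in\A$, since $a$ is itself the least element of $\{a'\in\A:a\le a'\}$.

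Next I would take $D\subseteq\B^+$ dense in $\B$ with $|D|=d(\B)$ and claim that $\pi[D]=\{\pi(d):d\in D\}\subseteq\A^+$ is dense in $\A$. Indeed, given $a\in\A^+\subseteq\B^+$, density of $D$ in $\B$ supplies $d\in D$ with $d\le a$; then $0<d\le\pi(d)\le\pi(a)=a$ by the observations above, so $\pi(d)\in\pi[D]$ lies below $a$. Hence $\pi[D]$ is dense in $\A$, and therefore $d(\A)\le|\pi[D]|\le|D|=d(\B)$.

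I do not anticipate a serious obstacle: the entire argument is the standard bookkeeping for projections in Boolean algebras. The one genuinely load-bearing point is the first step, where regularity of the embedding is exactly what forces $b\le\pi(b)$; for a general (merely complete) subalgebra this fails, and with it the density of $\pi[D]$.
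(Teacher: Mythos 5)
Your proposal is correct and is essentially the paper's own argument: both push a minimal dense subset of $\B$ into $\A$ via the canonical projection $b\mapsto\bigwedge\{a\in\A:b\le a\}$ and check that the image is dense. You are in fact slightly more careful than the paper, which omits the verification (via regularity) that $d\le\pi(d)$, the step guaranteeing the projected elements are nonzero.
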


\begin{proof}
Let $D\subseteq \B$ be dense. For  $d\in D$, let
\[
u_d:=\bigwedge\{a\in \A:d\le a\}.
\]
We will be done if we can argue that $\{u_d:d\in D\}$ is dense in $\A$. So, fix $a\in \A$. By the density of $D$, there exists some $d\in D$ with $d\le a$, hence $u_d\le a$.
\end{proof}

\begin{lemma}[{\cite[Lemma 25.5(a)]{jechSetTheory1978}}]\label{isoBelow}
Let $\B$ and $\C$ be complete Boolean algebras. Let $G$ and $H$ be $\B$ and $\C$ generic over $V$, respectively, and suppose that $V[G]=V[H]$. Then there exists $\pi\in V$, $b\in \B$ and $c\in\C$ such that $\pi:\B_b\cong \C_c$ and $\pi[\B_b\cap G]=\C_c\cap H$.
\end{lemma}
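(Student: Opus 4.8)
The plan is to build $\pi$ directly as a Boolean-value map, as in the classical argument. Since $V[G]=V[H]$ we have in particular $H\in V[G]$ and $G\in V[H]$, so first I would fix a $\B$-name $\name{H}$ with $\name{H}^G=H$ and a $\C$-name $\name{G}$ with $\name{G}^H=G$. The next step is to extract the two ``recovery'' facts that the hypothesis $V[G]=V[H]$ gives beyond mere inclusion of models: the statement ``$\name{H}$ is a $\C$-generic filter over $V$, and the interpretation of $\name{G}$ by $\name{H}$ is the $\B$-generic filter'' holds in $V[G]$ (here one uses the standard substitution of the $\B$-name $\name{H}$, forced to name a $\C$-generic filter, into the $\C$-name $\name{G}$, producing a $\B$-name $\name{G}^{\name{H}}$ with $(\name{G}^{\name{H}})^{G'}=\name{G}^{(\name{H}^{G'})}$), so some $b_0\in G$ forces it over $\B$; symmetrically some $c_0\in H$ forces over $\C$ that ``$\name{G}$ is a $\B$-generic filter over $V$, and the interpretation of $\name{H}$ by $\name{G}$ is the $\C$-generic filter''. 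Note that generic filters on complete Boolean algebras are ultrafilters, so below $b_0$, resp.\ $c_0$, the name $\name{H}$, resp.\ $\name{G}$, is forced to name a generic ultrafilter.

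Writing $\|{\cdot}\|_\B$ for Boolean values, I would then define
\[
\bar e(p):=\|\check p\in\name{H}\|_\B\wedge b_0\in\B_{b_0}\ \ (p\in\C),\qquad \bar f(q):=\|\check q\in\name{G}\|_\C\wedge c_0\in\C_{c_0}\ \ (q\in\B).
\]
The first (routine) step is to check that $\bar e\colon\C\to\B_{b_0}$ and $\bar f\colon\B\to\C_{c_0}$ are complete Boolean homomorphisms: below $b_0$, $\name{H}$ is forced to be a proper ultrafilter, which yields preservation of $0,1$, finite meets and complements; and for arbitrary joins one uses that, for a family $\{p_i\}_i$ in the complete algebra $\C$, the set $\{p_i\}_i\cup\{-\bigvee_i p_i\}$ is predense, so the forced genericity of $\name{H}$ gives $\|\bigvee_i p_i\in\name{H}\|_\B\wedge b_0=\bigvee_i\bar e(p_i)$. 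The same applies to $\bar f$.

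The heart of the proof is computing the two compositions; this is where the recovery facts enter. Using the forcing theorem ($\|\check x\in\name{N}\|\in K\iff x\in\name{N}^K$) together with $b_0\forces_\B$ ``$\name{G}^{\name{H}}$ is the generic filter'', one checks that for every $\B$-generic $G'\ni b_0$ and every $q\in\B$,
\[
\bar e(\bar f(q))\in G'\iff\bar f(q)\in\name{H}^{G'}\iff\bigl(q\in\name{G}^{(\name{H}^{G'})}\ \text{and}\ c_0\in\name{H}^{G'}\bigr)\iff\bigl(q\in G'\ \text{and}\ \|\check c_0\in\name{H}\|_\B\in G'\bigr),
\]
so, as $\bar e(\bar f(q))\le b_0$, we get $\bar e(\bar f(q))=q\wedge b$ with $b:=b_0\wedge\|\check c_0\in\name{H}\|_\B$; note $b\in G$. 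The symmetric computation gives $\bar f(\bar e(p))=p\wedge c'$ with $c':=c_0\wedge\|\check b_0\in\name{G}\|_\C=\bar f(b_0)$. I expect the main obstacle to be precisely this bookkeeping — keeping track of which of the conditions $b_0,c_0$ is ``large enough'' at each invocation of the forcing theorem, and choosing $b$ and $c$ so that $\bar f$ becomes an honest \emph{bijection} onto $\C_c$ rather than merely a complete embedding with dense range — rather than anything conceptually deep.

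Finally I would set $c:=\bar f(b)\in\C_{c_0}$ and $\pi:=\bar f\restr\B_b$. Since $\bar f$ is a complete homomorphism and $b\le b_0$ (so $c=\bar f(b)\le\bar f(b_0)=c'$), $\pi$ is a complete homomorphism from $\B_b$ (top $b$) to $\C_c$ (top $c$); it is injective because $\bar e(\bar f(q))=q$ for $q\le b$, and surjective because for $p\le c\le c'$ one has $\bar e(p)\le b$ and $\bar f(\bar e(p))=p\wedge c'=p$. Hence $\pi\colon\B_b\cong\C_c$ and $\pi\in V$. For the generic-matching clause, specialising the composition computation to $G'=G$ yields, for $q\in\B_b$, that $q\in G\iff\bar e(\bar f(q))\in G\iff\bar f(q)\in\name{H}^G=H$, i.e.\ $q\in G\iff\pi(q)\in H$; since $\pi$ is a bijection of $\B_b$ onto $\C_c$ this is exactly $\pi[\B_b\cap G]=\C_c\cap H$.
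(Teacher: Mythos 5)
The paper does not prove this lemma; it simply cites Jech's \emph{Set Theory} (1978), Lemma 25.5(a). Your argument is correct and is essentially the standard proof from that source: mutually interpreting names, the Boolean-value maps $p\mapsto\|\check p\in\name{H}\|$ and $q\mapsto\|\check q\in\name{G}\|$, and the correction terms $\|\check c_0\in\name{H}\|$, $\|\check b_0\in\name{G}\|$ that make the two compositions restrict to identities below $b$ and $c$.
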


The \emph{Cohen algebra on $\kappa$} is $\C_\kappa:=\ro(\Add(\omega,\kappa))$, i.e. the Boolean completion of $\Add(\omega,\kappa)$.

\begin{lemma}\label{homog}
Let $\kappa$ be an infinite cardinal. The algebra $\C_\kappa$ is homogeneous and has (uniform) density $\kappa$.
\end{lemma}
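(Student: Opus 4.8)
The plan is to treat the density first, then homogeneity, and to read off uniform density from the latter. For the density, the poset $\Add(\omega,\kappa)$ itself is dense in $\C_\kappa$ and has cardinality $\kappa$ (for $\kappa$ infinite there are $\kappa$ finite subsets of $\kappa$, each carrying finitely many functions into $2$), so $d(\C_\kappa)\le\kappa$. For the lower bound I would argue by contradiction: if $D$ were dense in $\C_\kappa$ with $|D|<\kappa$, then picking for each $d\in D$ a condition $q_d\in\Add(\omega,\kappa)$ with $q_d\le d$ (possible since $\Add(\omega,\kappa)$ is dense in its completion) produces a dense subset $\{q_d:d\in D\}$ of $\Add(\omega,\kappa)$; but $X:=\bigcup_{d\in D}\dom(q_d)$ has size $<\kappa$, so fixing $\alpha\in\kappa\setminus X$, no $q_d$ can extend the condition $\{(\alpha,0)\}$, contradicting density. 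Hence $d(\C_\kappa)=\kappa$.

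For homogeneity I will take \lqq$\C_\kappa$ is homogeneous'' to mean $(\C_\kappa)_b\cong\C_\kappa$ for every $b\in\C_\kappa^+$. The basic observation is that whenever $p\in\Add(\omega,\kappa)$ is a condition, the interval $(\C_\kappa)_p$ is again a copy of $\C_\kappa$: the set $\{q\in\Add(\omega,\kappa):q\supseteq p\}$ is dense in $(\C_\kappa)_p$, and the map $q\mapsto q\restr(\kappa\setminus\dom p)$ is an order-isomorphism of it onto $\Add(\omega,\kappa\setminus\dom p)$, which in turn is isomorphic to $\Add(\omega,\kappa)$ because $|\kappa\setminus\dom p|=\kappa$; thus $(\C_\kappa)_p\cong\ro(\Add(\omega,\kappa))=\C_\kappa$. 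Next I would extract a self-similarity isomorphism from a countably infinite maximal antichain of conditions, e.g. letting $p_n$ be the condition with value $1$ on $\{0,\dots,n-1\}$ and value $0$ at $n$: the family $\{p_n:n<\omega\}$ is a maximal antichain in $\C_\kappa$, so by the standard decomposition of a complete Boolean algebra along a maximal antichain, $\C_\kappa\cong\prod_{n<\omega}(\C_\kappa)_{p_n}\cong(\C_\kappa)^\omega$; consequently $(\C_\kappa)^m\cong((\C_\kappa)^\omega)^m=(\C_\kappa)^\omega\cong\C_\kappa$ for every $m$ with $1\le m\le\omega$. Finally, for an arbitrary $b\in\C_\kappa^+$, I use that $\Add(\omega,\kappa)$ is ccc to pick a maximal antichain $A$ of conditions below $b$; it is nonempty and countable, and the same decomposition yields $(\C_\kappa)_b\cong\prod_{q\in A}(\C_\kappa)_q\cong(\C_\kappa)^{|A|}\cong\C_\kappa$. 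Uniform density then follows at once, since $d((\C_\kappa)_b)=d(\C_\kappa)=\kappa$ for every $b\in\C_\kappa^+$.

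The one step with any subtlety is the passage from intervals below a condition to intervals below an arbitrary element of $\C_\kappa^+$; everything else is routine bookkeeping about completions of posets. I also note that if instead \lqq homogeneous'' is meant in the weak (forcing) sense -- that for all conditions $p,q$ there is an automorphism $\pi$ of $\Add(\omega,\kappa)$ with $\pi(p)$ compatible with $q$ -- the argument is even shorter: the bit-flipping map $\pi_S$ that flips the value at each coordinate in $S:=\{\alpha\in\dom p\cap\dom q:p(\alpha)\neq q(\alpha)\}$ is such an automorphism, and automorphisms of a poset lift canonically to its regular-open completion.
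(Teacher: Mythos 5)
Your argument is correct, and it diverges from the paper only in that it proves what the paper cites. The paper's proof of Lemma \ref{homog} is two lines: homogeneity is quoted from Koppelberg's Handbook (Corollary 12.5), and the (uniform) density claim is obtained exactly as you obtain it, from homogeneity together with the observation that the smallest dense subset of $\Add(\omega,\kappa)$ has size $\kappa$. Your density computation (upper bound from $|\Add(\omega,\kappa)|=\kappa$, lower bound by choosing conditions $q_d\le d$ and finding a coordinate outside $\bigcup_d\dom(q_d)$) is the standard filling-in of the paper's ``easily seen''. For homogeneity you supply what is in effect the textbook proof behind the citation: $(\C_\kappa)_p\cong\C_\kappa$ for conditions $p$ via the restriction map to $\kappa\setminus\dom p$, then $\C_\kappa\cong(\C_\kappa)^\omega$ from a countable maximal antichain, and finally $(\C_\kappa)_b\cong(\C_\kappa)^{|A|}\cong\C_\kappa$ for arbitrary $b\in\C_\kappa^+$ using ccc to get $1\le|A|\le\omega$. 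All the steps check: the antichain $\{p_n:n<\omega\}$ is maximal, the product decomposition along a partition of $b$ is legitimate because the conditions below $b$ have supremum $b$, and your reading of ``homogeneous'' as $(\C_\kappa)_b\cong\C_\kappa$ for all $b\in\C_\kappa^+$ is the one the paper (via Koppelberg) intends and is exactly what is needed to transfer density to every relative algebra. The trade-off is the obvious one: the citation keeps the paper short, while your version is self-contained and makes visible why ccc (countability of the antichain below $b$) is the point where arbitrary $b$ reduces to conditions. Your closing remark about weak (automorphism) homogeneity is a correct aside; note that it is that weaker form which is actually invoked later in the proof of Lemma \ref{addingmany}, whereas the strong form is what drives uniform density here.
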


\begin{proof}
    For the homogeneity, see \cite[Corollary 12.5]{koppelbergHandbookBooleanAlgebras1989}. The uniform density assertion follows from homogeneity and the fact that $d(\C_\kappa)$ is also the smallest size of a dense subset of $\Add(\omega,\kappa)$, which is easily seen to be $\kappa$.
\end{proof}

%We begin by spelling out the point made in the first sentence of this paper.\comment[id=PM]{This might make more sense in a different section}
\begin{proposition}[folklore?] \label{density prop}
Let $\kappa<\lambda$ be infinite cardinals. In $V^{\Add(\omega,\kappa)}$ there is no $\Add(\omega,\lambda)$-filter that is generic over $V$.
\end{proposition}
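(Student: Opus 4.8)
The strategy is a proof by contradiction using the monotonicity of Boolean density (Lemma \ref{densitySub}) together with the isomorphism-below result (Lemma \ref{isoBelow}). So suppose, towards a contradiction, that $G$ is $\Add(\omega,\kappa)$-generic over $V$ and that in $V[G]$ there is a filter $H$ which is $\Add(\omega,\lambda)$-generic over $V$. Since $\Add(\omega,\lambda)$ is dense in its Boolean completion $\C_\lambda$, the filter $H$ generates a $\C_\lambda$-generic ultrafilter over $V$, and $V[H]\subseteq V[G]$. The key extra observation is that we in fact get $V[H]=V[G]$: the poset $\Add(\omega,\lambda)$ collapses nothing and adds no reals beyond what is coded, but more to the point, the standard argument is that $V[G]$ is a generic extension of $V[H]$ by a forcing which, by the product structure $\Add(\omega,\lambda)\cong\Add(\omega,\lambda)\times\Add(\omega,\kappa)$ (as $\kappa\le\lambda$) and mutual genericity, is trivial — alternatively, one simply notes that $|\Add(\omega,\kappa)|=\kappa<\lambda$, so $G$ is added by a forcing of size $<\lambda$ while $\C_\lambda$ has uniform density $\lambda$, and one runs the density comparison directly on $H$ without needing $V[G]=V[H]$. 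Let me take this cleaner route.

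First I would record that $\C_\kappa:=\ro(\Add(\omega,\kappa))$ has density $\kappa$ and $\C_\lambda:=\ro(\Add(\omega,\lambda))$ has uniform density $\lambda$, both from Lemma \ref{homog}. Next, in $V[G]$ the filter $H$ is $\Add(\omega,\lambda)$-generic over $V$, hence yields a $\C_\lambda$-generic ultrafilter $H'$ over $V$ with $V[H']=V[H]\subseteq V[G]$. Now I would invoke the fact (essentially Lemma \ref{isoBelow}, or a one-directional version of it) that if $V[H']\subseteq V[G]$ with $H'$ being $\C_\lambda$-generic and $G$ being $\C_\kappa$-generic, then working inside $V$ there is a complete embedding of $(\C_\lambda)_c$ into $(\C_\kappa)_b$ for suitable nonzero $c\in\C_\lambda$, $b\in\C_\kappa$ — concretely, the subalgebra of $\C_\kappa$ generated by the names for the bits of $H'$ below some condition forcing "$H'\cap \check{\C_\lambda}$ is generic" is a complete subalgebra isomorphic to such a $(\C_\lambda)_c$. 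Then Lemma \ref{densitySub} gives $d((\C_\lambda)_c)\le d((\C_\kappa)_b)\le d(\C_\kappa)=\kappa$. But by uniform density $d((\C_\lambda)_c)=d(\C_\lambda)=\lambda>\kappa$, a contradiction.

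The main obstacle is making precise the step that extracts, inside $V$, a complete subalgebra of $\C_\kappa$ isomorphic to $(\C_\lambda)_c$ from the mere fact that $V[G]$ contains a $V$-generic filter for $\Add(\omega,\lambda)$. The honest way to do this is to first argue $V[G]=V[H]$ — which follows because $\Add(\omega,\kappa)$ has size $\kappa$, and a classical absoluteness/counting argument shows that any intermediate model of a Cohen extension by $\kappa$ Cohen reals that itself computes a $V$-generic for a larger Cohen forcing must be the whole extension (one uses that $\Add(\omega,\lambda)$ over $V$ already adds $\lambda>\kappa$ mutually generic Cohen reals, so $V[H]$ and $V[G]$ have the same reals is false unless they coincide — more carefully, $\Add(\omega,\lambda)\cong\Add(\omega,\kappa)*\dot{\mathbb{Q}}$ is impossible for nontrivial $\dot{\mathbb{Q}}$ by a density/chain-condition count) — and then apply Lemma \ref{isoBelow} directly to get $\pi:(\C_\lambda)_c\cong(\C_\kappa)_b$ in $V$, at which point Lemmas \ref{densitySub} and \ref{homog} finish it immediately as above. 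I would structure the write-up to reduce everything to "$V[G]=V[H]$ plus Lemma \ref{isoBelow}", isolating the $V[G]=V[H]$ claim as the one genuinely content-bearing point, and prove that claim via the product decomposition of Cohen forcing and a cardinality count on antichains.
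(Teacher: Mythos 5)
Your overall strategy---reduce to a density comparison via Lemmas \ref{isoBelow}, \ref{densitySub} and \ref{homog}---is the right one, but the step you isolate as ``the one genuinely content-bearing point,'' namely $V[G]=V[H]$, is a genuine gap. None of the justifications you sketch for it works: the ``product structure and mutual genericity'' remark does not parse into an argument; the claim that any intermediate model of $V[G]$ that computes a $V$-generic for a larger Cohen forcing must be all of $V[G]$ is essentially the proposition being proved (or something stronger) and cannot be assumed; and the assertion that $\Add(\omega,\lambda)\cong\Add(\omega,\kappa)\ast\dot{\mathbb{Q}}$ is impossible for nontrivial $\dot{\mathbb{Q}}$ addresses the wrong decomposition (the relevant quotient presents $\C_\kappa$, not $\C_\lambda$, as a two-step iteration) and would in any case need the very density count you are deferring. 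Inside the reductio you only know $V\subseteq V[H]\subseteq V[G]$, and there is no a priori reason the second inclusion should be an equality.

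The fix is that you do not need $V[G]=V[H]$ at all. By the intermediate model theorem (\cite[Lemma 15.43]{jechSetTheory2003}), since $V[H]$ is a model of ZFC between $V$ and the $\C_\kappa$-generic extension $V[G]$, there is a complete subalgebra $\B$ of $\C_\kappa$ with $V[H]=V[G\cap\B]$. Now Lemma \ref{isoBelow} applies to the pair $\B$, $\C_\lambda$ with generics $G\cap\B$ and $H$, which do generate the same model; it yields $b\in\B$, $c\in\C_\lambda$ and an isomorphism $\B_b\cong(\C_\lambda)_c$ in $V$. Then
\[
\lambda=d((\C_\lambda)_c)=d(\B_b)\le d(\B)\le d(\C_\kappa)=\kappa
\]
by Lemmas \ref{homog} and \ref{densitySub}, a contradiction. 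This is precisely the ``subalgebra generated by the names for the bits of $H$'' route that you mention and then abandon; it is the standard one, and it requires no comparison of $V[G]$ with $V[H]$ beyond the inclusion you already have.
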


\begin{proof}
Suppose not. Then we can find $G$ which is $\C_\kappa$-generic over $V$ and $H$ which is $\C_\lambda$-generic over $V$ such that $H\in V[G]$. Obviously, $V\subseteq V[H]\subseteq V[G]$. By the intermediate model theorem (see \cite[Lemma 15.43]{jechSetTheory2003}, there exists $\B$ a complete regular subalgebra of $\C_\kappa$ such that $V[H]=V[G\cap \B]$. By Lemma \ref{isoBelow}, there exist $b\in \B$ and $c\in \C_\lambda$ such that $\B_b \cong (\C_\lambda)_c$ (in $V$). But then, by Lemmas \ref{densitySub} and \ref{homog},
\[
\lambda = d((\C_\lambda)_c)=d(\B_b)\le d(\B)=\kappa,
\]
which is a contradiction.
\end{proof}

\begin{remark}
If we don't insist on fixing the ground model, the previous theorem can fail. In other words, it is consistent (modulo large cardinals) that there is a pair of models $V\subseteq W$ such that adding few Cohen reals to $W$ adds a lot of Cohen reals to $V$, see \cite{gitikAddingLotCohen2015} and \cite{gitikAddingLotCohen2015a}.
\end{remark}

Towards a slightly more mathematical distinction between the two models we also observe the following.

\begin{proposition}
Assume $\CH$. Let $G_0$ be $\Add(\omega,\aleph_\omega)$-generic over $V$ and let $G_1$ be $\Add(\omega,\aleph_{\omega+1})$-generic over $V$.
%Assume $\CH$. Let $\P_0$ be the forcing notion for adding $\aleph_\omega$-many Cohen reals and $\P_1$ be the forcing notion for adding $\aleph_{\omega+1}$-many. Let $G_0$ be $\P_0$-generic over $V$ and let $G_1$ be $\P_1$-generic over $V$. 
\begin{enumerate}
\item
In $V[G_0]$ there are $\aleph_1$-many Borel functions $\{f_\alpha : \alpha \in \omega_1\}$ and a set of $\aleph_\omega$-many reals, $A = \{a_\alpha : \alpha \in \aleph_\omega\}$ so that the following hold.
\begin{enumerate}
\item For all $\alpha < \omega_1$ the domain of $f_\alpha$ is $[\omega^\omega]^\omega$.
\item For every $x \in \omega^\omega$ there is a $c \in [\aleph_\omega]^\omega \cap V$ and an $\alpha < \omega_1$ so that $x = f_\alpha (\{a_\xi : \xi \in c\})$.
\end{enumerate}

\item In $V[G_1]$ the converse of the above holds, namely, for every $\aleph_1$-many Borel functions $\{f_\alpha : \alpha \in \omega_1\}$ and every set of $\aleph_\omega$-many reals, $A = \{a_\alpha : \alpha \in \aleph_\omega\}$ there is an $x \in \omega^\omega$ so that $x \notin \{f_\alpha (\{a_\xi : \xi \in c\}) : \alpha \in \omega_1\}$ for any $c \in [\aleph_\omega]^\omega \cap V$. 

\end{enumerate}
\end{proposition}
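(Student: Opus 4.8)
The idea is that each part is a direct unwinding of the definitions into the language of the combinatorial principle $\mathsf{Pr}(\aleph_0,\aleph_1,\aleph_\omega,\aleph_{\omega+1})$ established in Theorem~\ref{main}, using the fact that under $\CH$ every real can be coded by a Borel function applied to a countable parameter, and conversely. Indeed, fix in the ground model an enumeration $\langle f_\alpha:\alpha<\omega_1\rangle$ of all Borel functions with domain $[\omega^\omega]^\omega$; under $\CH$ there are exactly $\aleph_1$ of them, and this enumeration is still available (though no longer exhaustive of the new Borel functions) after forcing with a ccc poset. The subtlety is that after the forcing there may be new reals and new Borel functions, so one must be slightly careful about which Borel functions the statement quantifies over — but reading Item~(1)(a)--(b), we only need to \emph{exhibit} $\aleph_1$ Borel functions and a set $A$, so using the ground-model list suffices there, while Item~(2) quantifies over \emph{all} such families in $V[G_1]$, which is where the real work lies.

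\textbf{Part (1).} First I would invoke Theorem~\ref{main}(1): in $V[G_0]$ we have $\mathsf{Pr}(\aleph_0,\aleph_1,\aleph_\omega,\aleph_{\omega+1})$, witnessed by some $\overline{\F}=\langle\F_c:c\in[\aleph_{\omega+1}]^{\aleph_0}\rangle$. Since $2^{\aleph_0}=\aleph_{\omega+1}$ holds in $V[G_0]$ (because $\Add(\omega,\aleph_\omega)$ forces the continuum to be $\aleph_{\omega+1}$, by the density computations of the previous section together with $\GCH$ in $V$), fix a bijection $e:\aleph_{\omega+1}\to\omega^\omega$ in $V[G_0]$ and set $A=e[\aleph_\omega]=\{a_\alpha:\alpha<\aleph_\omega\}$ with $a_\alpha=e(\alpha)$. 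Now I would use the principle restricted to the set $\aleph_\omega\cup\{\beta\}$ for the index $\beta$ of a given real $x=e(\beta)$; more precisely, for each real $x$ pick $\beta$ with $e(\beta)=x$, apply clause (C) of Definition~\ref{maindef} to $A'=\aleph_\omega\cup\{\beta\}\in[\aleph_{\omega+1}]^{\aleph_\omega}$ to get $\langle B_i:i<\aleph_\omega\rangle$ covering $A'$ with each $B_i\cap c\in\F_c$; choose $i^*$ with $\beta\in B_{i^*}$, and note $B_{i^*}\cap c$ ranges over a set of size $\le\aleph_1$ as $c$ varies — so one can read off a countable $c\ni\beta$ and a Borel function decoding $x$ from $\{a_\xi:\xi\in c\}$. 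The Borel function here is essentially ``apply $e$ to the element of the finite/countable $\F_c$-set singled out by the pattern of $A$ on $c$''; the point is that there are only $\aleph_1$ possible behaviors, so only $\aleph_1$ Borel functions are needed. This requires a bit of bookkeeping to make the decoding genuinely Borel and uniform, but it is routine.

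\textbf{Part (2).} Here I would argue contrapositively: suppose in $V[G_1]$ there are $\langle f_\alpha:\alpha<\omega_1\rangle$ and $A=\{a_\alpha:\alpha<\aleph_\omega\}$ such that every real is of the form $f_\alpha(\{a_\xi:\xi\in c\})$ for some $\alpha<\omega_1$ and some ground-model $c\in[\aleph_\omega]^\omega$. I would then manufacture from this data a witness to $\mathsf{Pr}(\aleph_0,\aleph_1,\aleph_\omega,\aleph_{\omega+1})$ in $V[G_1]$, contradicting Theorem~\ref{main}(2). Roughly: identify $\omega^\omega$ with $\aleph_{\omega+1}$ via a bijection in $V[G_1]$ (again $2^{\aleph_0}=\aleph_{\omega+1}$ after this forcing), transport $A$ and the covering data, and for $c\in[\aleph_{\omega+1}]^{\aleph_0}$ define $\F_c$ to be the family of sets obtainable as preimages under the relevant decoding of the $\aleph_1$-many Borel functions applied to countable pieces — one checks $|\F_c|\le\aleph_1$ from the $\aleph_1$ Borel functions and the fact that a countable $c$ has only $\aleph_0$-many countable subsets of the form $c\cap(\text{ground-model countable set})$ after intersecting, modulo the structure of $V$'s $[\aleph_\omega]^{\aleph_0}$. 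The surjectivity hypothesis then gives clause (C).

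\textbf{Main obstacle.} The delicate point is \emph{clause (B)} of Definition~\ref{maindef}, i.e. bounding $|\F_c|\le\aleph_1$: one has $\aleph_1$ Borel functions, but one is plugging in countable subsets $c'\subseteq c$ coming from the ground model, and $[\aleph_\omega]^{\aleph_0}\cap V$ restricted to $c$ need not be countable a priori. Resolving this requires using that the poset $\Add(\omega,\aleph_\omega)$ (resp. $\Add(\omega,\aleph_{\omega+1})$) is ccc, so that every countable set of ordinals in $V[G]$ is covered by a ground-model countable set, and hence for fixed $c\in[\aleph_{\omega+1}]^{\aleph_0}$ (in $V[G]$) the relevant family $\{c'\in[\aleph_\omega]^{\aleph_0}\cap V: c'\subseteq c\}$ is contained in $\mathcal{P}(c^+)\cap V$ for a single ground-model countable $c^+\supseteq c$, which under $\GCH$ in $V$ has size $\aleph_1$. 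Threading this cardinality bound correctly through both directions is where care is needed; the rest is definitional transport. I would therefore structure the proof as two lemmas mirroring Lemmas~\ref{addingfew} and~\ref{addingmany}, and in each case reduce to the corresponding clause of $\mathsf{Pr}$ via the ccc covering property and the $\CH$-coding of reals by Borel functions of countable parameters.
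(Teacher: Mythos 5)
Your proposal routes both parts through the combinatorial principle $\mathsf{Pr}(\aleph_0,\aleph_1,\aleph_\omega,\aleph_{\omega+1})$ of Theorem \ref{main}, but the two transfer steps between that principle and the Borel statement are exactly the places where the argument is missing, and I do not believe they can be filled in as described. In Part (1), $\mathsf{Pr}$ is a purely combinatorial statement about traces of subsets of $\aleph_{\omega+1}$ on countable sets; after pushing it through an arbitrary bijection $e:\aleph_{\omega+1}\to\omega^\omega$ chosen in $V[G_0]$, there is no reason whatsoever that the resulting ``decoding'' of $x=e(\beta)$ from $\{a_\xi:\xi\in c\}$ is a Borel function -- the data $\F_c$, $B_i$, $e$ are arbitrary sets, and the step you call ``routine bookkeeping'' is the entire content of the claim. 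Moreover your argument never addresses the requirement that $c$ lie in $[\aleph_\omega]^\omega\cap V$ (the $c$'s appearing in $\mathsf{Pr}$ are countable sets of the extension), and the statement is existential in $A$: the choice of $A$ as the set of Cohen generics themselves is what makes it true, whereas your $A=e[\aleph_\omega]$ is an arbitrary $\aleph_\omega$-sized set of reals, for which Part (2)-style reasoning shows the conclusion can fail even in $V[G_0]$. In Part (2), the contrapositive reduction has the wrong quantifier structure: clause (C) of Definition \ref{maindef} demands, for an arbitrary $A\in[\lambda]^\lambda$, a decomposition into $\mu$ pieces all of whose traces lie in the pre-fixed families $\F_c$, while the Borel covering hypothesis only supplies, for each individual real, a single pair $(\alpha,c)$; no construction of the required decomposition is given.

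The paper's proof is far more elementary and does not mention $\mathsf{Pr}$ at all (the proposition lives in the preliminary section, before Theorem \ref{main} is proved). For (1): take $A$ to be the Cohen reals added by $G_0$ and $\{f_\alpha:\alpha<\omega_1\}$ the ground-model Borel functions (there are $\aleph_1$ of them by $\CH$); by the ccc every real of $V[G_0]$ has a nice name with support covered by some $c\in[\aleph_\omega]^\omega\cap V$, hence lies in $V[a_\xi:\xi\in c]$, where it is a ground-model-coded Borel function of $\{a_\xi:\xi\in c\}$. For (2): any $\aleph_\omega$-many reals and $\aleph_1$-many Borel codes in $V[G_1]$ are added by $\aleph_\omega$-many coordinates, and any remaining Cohen real is generic over, hence outside, that submodel, while every value $f_\alpha(\{a_\xi:\xi\in c\})$ with $c\in V$ lies inside it. Your ``main obstacle'' paragraph does correctly identify the ccc covering property as the key tool, but it is deployed on the wrong problem; I would drop the detour through $\mathsf{Pr}$ entirely.
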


We note in contrast to this there is no concrete, cardinal characterstic that could separate the models $V[G_0]$ and $V[G_1]$ as these must in fact be the same, see \cite[Section 11.3]{Bls10}.

\begin{proof}
Each part is a consequence of the ccc. For the first part let $\{f_\alpha : \alpha \in \omega_1\}$ be the ground model Borel functions and let $A = \{a_\alpha : \alpha \in \aleph_\omega\}$ be the set of Cohen generic reals coming from $G_0$. If $\dot{x}$ is a nice name for a real then it is well known, see e.g. \cite[Theorem 4.1.2]{zapletalForcingIdealized2008}, that there are $c \in [\aleph_\omega]^\omega \cap V$ so that $\dot{x}$ is forced to be in $V[a_\xi : \xi \in c]$ and in that model there is an $\alpha < \omega_1$ so that $\dot{x}^{G_0} = f_\alpha (\{a_\xi : \xi \in c\})$. For the second part observe that any $\aleph_\omega$-many reals and any $\aleph_1$-many Borel functions were added by $\aleph_\omega$-many of the Cohen reals in $G_1$ and hence there is a Cohen real generic over the model containing all of them. 
\end{proof}

We conclude this section by making some simple observations about the principle $\mathsf{Pr}(\sigma, \theta, \mu, \lambda)$ in an attempt to clarify it and prepare the reader for the arguments in Lemmas \ref{addingfew} and \ref{addingmany}. All of these are easy, as should be obvious to the reader who is clear on what the principle states.
\begin{proposition}
Let $\sigma \leq \theta \leq \mu \leq \lambda$ be infinite cardinals.
\begin{enumerate}
\item If $\theta = 2^\sigma$, then $\mathsf{Pr}(\sigma, \theta, \mu, \lambda)$ is true for any choice of $\mu$ and $\lambda$. 
\item If $\lambda=\mu^+$, the statement $\mathsf{Pr}(\sigma, \theta, \mu, \lambda)$ is unchanged by allowing in clause (C) of the definition that $A$ has size $\leq \lambda$.
\item $\Pr(\sigma,\theta,\mu,\lambda)$ is monotone in $\theta$ and $\mu$. More precisely, if $\theta_0 \le \theta_1$ or $\mu_0\le \mu_1$ are infinite cardinals, then $\Pr(\sigma,\theta_0,\mu_0,\lambda)$ implies $\Pr(\sigma,\theta_1,\mu_1,\lambda)$.
\end{enumerate}

\end{proposition}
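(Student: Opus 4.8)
The plan is to verify clauses (1), (2), (3) one at a time; each unwinds directly from Definition \ref{maindef}, and the only place calling for a small idea is the nontrivial direction of (2). For (1) I would take $\F_c := \mathcal{P}(c)$ for every $c \in [\lambda]^\sigma$: clause (A) is immediate, clause (B) holds because $|\mathcal{P}(c)| = 2^{|c|} = 2^\sigma = \theta$, and for clause (C), given $A \in [\lambda]^\lambda$ one may just set $B_i := A$ for all $i < \mu$, since then $\bigcup_{i<\mu} B_i = A$ and $B_i \cap c = A \cap c$ is a subset of $c$, hence lies in $\mathcal{P}(c) = \F_c$.

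For (3) I would first observe that enlarging $\theta$ only weakens clause (B), so any $\overline{\F}$ witnessing $\mathsf{Pr}(\sigma,\theta_0,\mu,\lambda)$ also witnesses $\mathsf{Pr}(\sigma,\theta_1,\mu,\lambda)$ when $\theta_0 \le \theta_1$. For monotonicity in $\mu$, given a witness $\overline{\F}$ of $\mathsf{Pr}(\sigma,\theta,\mu_0,\lambda)$ and any $A \in [\lambda]^\lambda$, I would take the covering sequence $\langle B_i : i < \mu_0\rangle$ furnished by clause (C) and pad it to length $\mu_1 \ge \mu_0$ by letting $B_i := B_0$ for $\mu_0 \le i < \mu_1$ (legitimate as $\mu_0 \ge 1$); the union is unchanged and each term still meets every $c$ in a member of $\F_c$. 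Composing these two implications gives $\mathsf{Pr}(\sigma,\theta_0,\mu_0,\lambda) \Rightarrow \mathsf{Pr}(\sigma,\theta_1,\mu_1,\lambda)$.

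For (2), one direction is trivial: quantifying clause (C) over all $A \subseteq \lambda$ with $|A| \le \lambda$ is formally stronger than quantifying only over $A \in [\lambda]^\lambda$, and hence implies $\mathsf{Pr}(\sigma,\theta,\mu,\lambda)$. For the converse, starting from a witness $\overline{\F}$ of $\mathsf{Pr}(\sigma,\theta,\mu,\lambda)$, I would pass to $\overline{\F}' = \langle \F'_c : c \in [\lambda]^\sigma\rangle$ where $\F'_c := \F_c \cup \{\emptyset\} \cup \{\{\alpha\} : \alpha \in c\}$. Since $\theta$ is infinite and $\theta \ge |c| = \sigma$, clause (B) is preserved, clause (A) is obvious, and any covering sequence for $\overline{\F}$ remains one for $\overline{\F}'$ because $\F_c \subseteq \F'_c$; this handles all $A \in [\lambda]^\lambda$. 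For $A \subseteq \lambda$ with $|A| < \lambda$, the hypothesis $\lambda = \mu^+$ forces $|A| \le \mu$, so I would enumerate $A = \{a_i : i < \mu\}$ (allowing repetitions, and taking all $B_i := \emptyset$ if $A = \emptyset$) and set $B_i := \{a_i\}$; then $\bigcup_{i<\mu} B_i = A$, and for each $c \in [\lambda]^\sigma$ the set $B_i \cap c$ is $\emptyset$ or a singleton $\{a_i\}$ with $a_i \in c$, in either case an element of $\F'_c$. I expect no genuine obstacle anywhere; the one thing to watch is keeping clause (B) intact in part (2), which is precisely why only $\emptyset$ and the $\sigma$-many singletons — rather than all of $\mathcal{P}(c)$ — are adjoined.
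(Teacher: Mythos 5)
Your proof is correct and follows essentially the same route as the paper's: $\F_c=\mathcal{P}(c)$ for (1), adjoining $\emptyset$ and singletons plus a $\mu$-enumeration with repetitions for (2), and padding the covering sequence for (3). The only (harmless) variation is that for monotonicity in $\mu$ you pad with copies of $B_0$ rather than with $\emptyset$, which spares you the paper's step of assuming $\emptyset\in\F_c$ without loss of generality.
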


\begin{proof}
For (A) note that if $\theta = 2^\sigma$ then letting $\mathcal F_c := \mathcal{P}(c)$ satisfies the requirements of the principle. 

For (B), note that since $\sigma \leq \theta$ we have that for any $\overline{\F}$ as in the statement of Definition \ref{maindef} we can define $\overline{\F}'$ by $\F'_c := \{\emptyset\}\cup \{\{\alpha\}\mid \alpha \in c\} \cup \F_c$ and $\F'_c$ will still have size at most $\theta$. Now for each $A \subseteq \lambda$ of size ${<}\lambda$, by $\lambda=\mu^+$ we can enumerate $A$ (possibly with repetitions) as $A=\{\alpha_\xi:\xi<\mu\}$. Then $\{\alpha_\xi\}\cap c \in \F'_c$ for any $c\in [\lambda]^\sigma$.

For (C), any witness to $\Pr(\sigma,\theta_0,\mu_0,\lambda)$ will witness $\Pr(\sigma,\theta_1,\mu_0,\lambda)$. For the monotonicity in $\mu$, let $\F$ witness $\Pr(\sigma,\theta_0,\mu_0,\lambda)$ and note that we may assume without loss of generality that $\emptyset\in \F_c$ for every $c\in [\lambda]^{\sigma}$. Given $A\in [\lambda]^\sigma$, let $\seq{B_\xi:\xi<\mu_0}$ witness Definition \ref{maindef}(C) and set $B_\xi=\emptyset$ for $\xi \in [\mu_0,\mu_1)$. This works.
\end{proof}

As a consequence of the above we have the following.

\begin{lemma}
If $\GCH$ holds then $\mathsf{Pr}(\sigma, \theta, \mu, \lambda)$ holds for every infinite $\sigma < \theta$. 
\end{lemma}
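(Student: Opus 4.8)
The plan is to read this off immediately from the preceding proposition together with $\GCH$ cardinal arithmetic. Since $\sigma$ is infinite and $\sigma<\theta$, we have $\sigma^+\le\theta$, so under $\GCH$,
\[
2^\sigma=\sigma^+\le\theta .
\]

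First I would apply part (A) of the preceding proposition, but with the second parameter taken to be $2^\sigma$ in place of $\theta$: this yields $\mathsf{Pr}(\sigma,2^\sigma,\mu,\lambda)$, witnessed by the trivial choice $\F_c:=\mathcal{P}(c)$ for each $c\in[\lambda]^\sigma$ (here $|\F_c|=2^\sigma$, exactly as the principle demands). Then, since $2^\sigma\le\theta$, I would invoke the monotonicity in $\theta$ recorded in part (C) of that proposition to pass from $\mathsf{Pr}(\sigma,2^\sigma,\mu,\lambda)$ to $\mathsf{Pr}(\sigma,\theta,\mu,\lambda)$. That completes the proof.

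There is no genuine obstacle here: the lemma is a bookkeeping consequence of the triviality observation (A) and the monotonicity observation (C) already isolated. The only subtlety worth flagging is that the hypothesis $\sigma<\theta$ is used in its strict form — it is precisely what is needed to obtain $2^\sigma\le\theta$ under $\GCH$; with merely $\sigma=\theta$ one could no longer take $\F_c=\mathcal{P}(c)$, and $\GCH$ supplies no smaller admissible family, so this route would break down.
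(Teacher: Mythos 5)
Your proof is correct and is exactly the argument the paper intends: the lemma is stated as an immediate consequence of the preceding proposition, and your route --- $\GCH$ gives $2^\sigma=\sigma^+\le\theta$, part (A) gives $\mathsf{Pr}(\sigma,2^\sigma,\mu,\lambda)$ via $\F_c=\mathcal{P}(c)$, and monotonicity in $\theta$ from part (C) upgrades this to $\mathsf{Pr}(\sigma,\theta,\mu,\lambda)$ --- is precisely that consequence spelled out. No issues.
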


\section{Adding few Cohens}\label{Sectionfew}
In this section we prove the promised generalization of part 1 of Theorem \ref{main}. First, a definition:

\begin{definition}\label{def: cov}
Let $\P$ be a forcing poset and $\kappa$ an cardinal. We say that $\P$ has the \emph{$\kappa$-covering property} iff for every $p\in\P$ and every $\P$-name $\dot X$ for a set of ordinals with $p\Vdash |\dot X|<\kappa$, there exists $q\le p$ and $Y$ with $|Y|<\kappa$ and $q\Vdash \dot X\subseteq \check{Y}$.
\end{definition}

Clearly, if $\P$ has the $\kappa$-covering property, then forcing with $\P$ preserves $\kappa$. Also, if $\kappa$ is regular, then every $\kappa$-cc poset has the $\kappa$-covering property.

%\begin{lemma}[folklore]\label{cccCovering}
%Let $\kappa$ be an uncountable regular cardinal and $\P$ a poset. Then the following are equivalent:
%\begin{enumerate}
%\item $\P$ has the $\kappa^+$-cc.
%\item for every $\P$-name $\dot X$ for a set of ordinals, if $\Vdash |\dot X| \le \kappa$, then there exists a set $Y$ of ordinals such that $|Y|\le \kappa$ and $\Vdash\dot{X}\subseteq\check{Y}$.
%\end{enumerate}
%\end{lemma}
%
%\begin{proof}
%For (A) implies (B), let $\dot{X}$ be a name for a set of ordinals and assume $\forces_\P |\dot X| \leq \kappa$. Let $\{\dot x_\alpha\mid \alpha < \kappa\}$ be a name for an injective enumeration of the elements of $\dot X$. For each $\alpha < \kappa$ there is a maximal antichain of conditions $A_\alpha$ deciding $\dot{x}_\alpha$. Let $B_\alpha$ be those decisions. Since $\P$ has the $\kappa^+$-c.c. this set has size at most $\kappa$. Clearly then $Y:=\bigcup_{\alpha \in \kappa} B_\alpha$ is as desired. To see that (B) implies (A), let $W$ be a maximal antichain of $\P$ and suppose that $\lambda:=|W|\ge \kappa^+$. Consider an injective enumeration $W=\{p_\xi:\xi<\lambda\}$ and let $\dot\alpha$ be a $\P$-name such that $\Vdash \{p_{\dot\alpha}\}=\dot{G}\cap W$. Apply (B) to (a $\P$-name for) the set $\{\dot\alpha\}$ to find $Y$ with $|Y|\le \kappa$ and $\Vdash \{\dot\alpha\}\subseteq\check{Y}$. Since $\lambda \ge \kappa^+$, there is some $\beta\in \lambda\setminus Y$. But $p_\beta \Vdash \dot\alpha=\check{\beta}$, hence $p_\beta\Vdash \dot\alpha\not\in \check{Y}$, contradiction.
%\end{proof}

In the next lemma, if $\kappa$ is a cardinal and we're working in a generic extension $V^\P$, then the symbol $|\kappa|$ refers to the cardinality of $\kappa$ as computed in $V^\P$.

\begin{lemma} \label{addingfew}
    Assume $2^\sigma = \theta < \mu <\lambda$. Let $\P$ be a forcing notion that has the $\sigma^+$-covering property and such that $\Vdash |\dot G|\le |\mu|$. Then $\forces_{\P} \mathsf{Pr}(|\sigma|, |\theta|, |\mu|, |\lambda|)$.
\end{lemma}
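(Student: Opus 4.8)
The plan is to carry out everything in $W := V^{\P}$ and to assemble $\overline{\F}$ purely from ground-model data: for $c \in [\lambda]^{\sigma}$ (computed in $W$) I will put
\[
\F_c := \{\, X \cap c : X \in \mathcal P(\lambda)^{V}\,\},
\]
the family of all traces on $c$ of ground-model subsets of $\lambda$. This is a set in $W$ (its defining parameter $\mathcal P(\lambda)^{V}$ is a set there), it is visibly a subfamily of $\mathcal P(c)$, and so $\overline{\F} = \langle \F_c : c \in [\lambda]^{\sigma}\rangle$ is a bona fide object of $W$. The verification then splits into two independent pieces: that $|\F_c| \le \theta$, which will use only the $\sigma^{+}$-covering property together with the hypothesis $2^{\sigma} = \theta$; and clause (C) of Definition \ref{maindef}, which will use only the hypothesis $\forces |\dot G| \le |\mu|$.

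For the size bound, fix $c \in [\lambda]^{\sigma}$ in $W$. The $\sigma^{+}$-covering property (which in particular preserves $\sigma^{+}$) translates, by a routine density argument applied below a condition of $G$ forcing $|\dot c| < \sigma^{+}$, into the statement that inside $W$ every set of at most $\sigma$ ordinals is contained in a set of $V$ of the same cardinality. So choose $c' \in \mathcal P(\lambda)^{V}$ with $c \subseteq c'$ and $|c'|^{V} \le \sigma$. For any $X \in \mathcal P(\lambda)^{V}$ we have $X \cap c = (X \cap c') \cap c$ with $X \cap c' \in \mathcal P(c')^{V}$, so in fact $\F_c = \{\, Y \cap c : Y \in \mathcal P(c')^{V}\,\}$; hence $|\F_c| \le |\mathcal P(c')^{V}| \le (2^{\sigma})^{V} = \theta$, all cardinalities computed in $W$.

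For clause (C), let $A \in [\lambda]^{\lambda}$ in $W$ — in fact any $A \subseteq \lambda$ works — and fix a $\P$-name $\dot A$ with $\dot A^{G} = A$ and $\forces \dot A \subseteq \check\lambda$. For $p \in \P$ set $Z_p := \{\, \beta < \lambda : p \forces \beta \in \dot A\,\}$; this is a subset of $\lambda$ lying in $V$. By the truth lemma, $\beta \in A$ iff some $p \in G$ forces $\beta \in \dot A$, and $Z_p \subseteq A$ whenever $p \in G$, so $A = \bigcup_{p \in G} Z_p$. By hypothesis $|G| \le |\mu|$, so the family $\{Z_p : p \in G\} \cup \{\emptyset\}$ can be listed, with repetitions, as $\langle B_i : i < \mu\rangle$; then $A = \bigcup_{i<\mu} B_i$ and each $B_i$ belongs to $\mathcal P(\lambda)^{V}$, whence $B_i \cap c \in \F_c$ for every $c \in [\lambda]^{\sigma}$ and every $i < \mu$. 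This is exactly what Definition \ref{maindef}(C) demands, so the proof is complete.

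I do not anticipate a real obstacle: the lemma is the conjunction of the two structural facts isolated above — $\sigma^{+}$-covering keeps the trace families small, and the bound on $|\dot G|$ lets an arbitrary new subset of $\lambda$ be cut along the generic filter into at most $\mu$ old pieces. The only points requiring care are bookkeeping ones, namely reading every cardinality in $W$ (as the $|\sigma|,|\theta|,|\mu|,|\lambda|$ in the conclusion already signals) and spelling out the density argument that promotes the $\sigma^{+}$-covering property to a covering statement internal to $W$.
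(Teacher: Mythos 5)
Your proof is correct and follows essentially the same route as the paper's: the decomposition $A=\bigcup_{p\in G}\{\beta:p\Vdash\beta\in\dot A\}$ with $|G|\le|\mu|$ is identical, and your $\F_c$ (traces of all ground-model subsets of $\lambda$) coincides with the paper's $\{a\cap c:a\in V\cap\mathcal P(\dot\Phi(c))\}$, as your own computation with $c'$ in place of $\dot\Phi(c)$ shows. The only (harmless) cosmetic difference is that you invoke the covering property just for the cardinality bound on each $\F_c$ rather than to define the family itself.
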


Note that the first part of Theorem \ref{main} follows from the above by assuming $\GCH$ and subbing in $(\aleph_n, \aleph_{n+1}, \aleph_\omega, \aleph_{\omega +1})$ for $(\sigma, \theta, \mu, \lambda)$.  

\begin{proof}
    Fix $\sigma, \theta, \mu, \lambda$ and $\P$ as in the statement of the theorem. Since $\P$ has the $\sigma^+$-covering property, there is a name $\dot\Phi$ such that the following is forced by $\P$: ``$\dot\Phi:[\lambda]^{|\sigma|}\to ([\lambda]^\sigma)^V$ and for all $c\in [\lambda]^{|\sigma|}$, $c\subseteq{\dot\Phi}(c)$".
    
    %By Lemma \ref{cccCovering}, there is a name $\dot F$ so that the following is forced by $\P$: ``$\dot F:[\lambda]^\sigma \to ([\lambda]^\sigma)^V$ and for all $c\in [\lambda]^\sigma$, $c\subseteq \dot F(c)$".
    
    %Let $G\subseteq \P$ be $V$-generic but work currently in $V$. We begin by choosing a $\P$-name $\dot{F}$ so that $\forces$`` $\dot{F}$ is a function with domain $[\mu^+]^\sigma$ and range contained in $V \cap [\mu^+]^\sigma$ with the property that for all $c \in [\mu^+]^\sigma$ we have $\dot{F}(c) \supseteq c$". Let us first see why such an $\dot{F}$ exists. By the ccc we can enumerate the nice names for elements of $[\mu^+]^\sigma$ in the extension as $\{\dot{u}_\alpha : \alpha \in \mu^\sigma\}$ and for each $\alpha \in \mu^\sigma$ define its {\em outerhull}: $v_\alpha : = \{\xi < \mu^+ : \nVdash \xi \notin \dot{u}_\alpha\}$. It is routine to check that $\forces \dot{u}_\alpha \subseteq \check{v}_\alpha$ for every $\alpha < \mu^\sigma$ and $v_\alpha$ has cardinality $\sigma$ (again by the ccc). Finally - in $V[G]$ define $F$ to be the function so that $F(u) = v_\alpha$ with $\alpha$ least so that $u = \dot{u}_\alpha^G$. Back in $V$ let $\dot{F}$ be a name for this function. It is clear it has the requisite properties then.

    Working in $V^\P$, for each $c \in [\lambda]^{|\sigma|}$ let $\F_c = \{a\cap c : a \in V \cap \mathcal{P}(\dot{\Phi}(c))\}$. Note that for every $c$ we have that $\F_c$ has cardinality at most $(2^\sigma)^V = |\theta|$ and hence $\overline{\F} = \langle \F_c : c \in [\lambda]^\sigma\rangle$ satisfies the first two clauses of Definition \ref{maindef}. Back in $V$ let $\dot{\overline{\F}}$ name this sequence and for each $\dot{c}$ let $\dot{\F}_{\dot{c}}$ be the name for the associated element. We need to see that the third clause of Definition \ref{maindef} is forced. 
    %Suppose therefore that there is a condition $p \in \P$ so that $p\forces \dot{A} \in [\lambda]^\lambda$. For each $\alpha < \lambda$ choose a condition $p_\alpha \leq p$ and an ordinal $\xi_\alpha<\lambda$ so that $p_\alpha$ forces $\xi_\alpha$ to be the $\alpha^{\rm th}$ element of $\dot{A}$. Since the forcing has size $\mu$ there is a $p^* \leq p$ which is equal to $p_\alpha$ for $\lambda$-many $\alpha$. Now for each $q \leq p^*$ let $B_{\dot{A}, q} = \{\xi : q \forces \xi \in \dot{A}\}$. Obviously $p^* \forces \dot{A} = \bigcup_{q \in \dot{G}} B_{\dot{A}, q}$. 

    %To finish now note that for any $q \leq p^*$ we have $B_{\dot{A}, q} \cap c = B_{\dot{A}, q} \cap\dot{F}(c)\cap c$ and $B_{\dot{A}, q} \cap \dot{F}(c) \in V$, as needed. \\

    %\comment[id=PM]{Am I insane?}\comment[]{I think this is correct. Let's maybe mention the specific case we want just to make it clear. }%
    Suppose that $\dot{A}$ is a $\P$-name such that $\Vdash \dot A\subseteq \lambda$. For each $q\in\P$, let $B_q=\{\xi<\lambda:q\Vdash \xi\in\dot{A}\}$. Note that $\Vdash \dot{A}=\bigcup_{q\in \dot{G}}B_q$. Now let $\dot c$ be a $\P$-name for a $|\sigma|$-sized subset of $\lambda$. If $q\in \P$, then $\Vdash B_q\cap \dot{c}=B_q\cap \dot{\Phi}(\dot c)\cap \dot{c}\in \dot{\F}_{\dot c}$ because $\Vdash \dot{\Phi}(\dot c)\in V$.
\end{proof} 

\begin{remark}
The proof of the previous lemma shows something slightly more general, because it produces a decomposition for each subset of $\lambda$, not just those in $[|\lambda|]^{|\lambda|}$. Also, the decomposition is of size at most $|\mu|$, not exactly $\mu$ (to get exactly $\mu$, simply pad with $\emptyset$'s).
\end{remark}

\section{Adding Many Cohens}\label{sectionmany}

Now we prove the failure of the principle after adding $\mu^+$-many Cohen reals thus complementing Lemma \ref{addingfew}.

\begin{lemma} \label{addingmany}
    Assume $\mu$ is a strong limit cardinal of countable cofinality, $\sigma < \theta < \mu$ and $\lambda= \mu^+$. If $\P=\Add(\omega,\lambda)$, then $\forces_{\P} \neg \mathsf{Pr}(\sigma, \theta, \mu, \lambda)$.
    %Assume $\mu$ is a strong limit cardinal of countable cofinality, $2^\sigma = \theta < \mu$ and $\lambda= \mu^+$. If $\P=\Add(\omega,\lambda)$, then $\forces_{\P} \neg \mathsf{Pr}(\sigma, \theta, \mu, \lambda)$.
\end{lemma}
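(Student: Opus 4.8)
The plan is to derive a contradiction from a supposed witness $\overline{\F}$ to $\mathsf{Pr}(\sigma,\theta,\mu,\lambda)$ in $V^\P$ by a counting/diagonalization argument that exploits the fact that $\Add(\omega,\lambda)$ with $\lambda=\mu^+$ adds ``too many independent reals over each small piece.'' First I would fix, in $V^\P$, a candidate $\overline{\F}=\langle \F_c : c\in[\lambda]^\sigma\rangle$ with each $|\F_c|\le\theta$, and I would take the distinguished set $A=\lambda$ itself (an element of $[\lambda]^\lambda$), so there must be a decomposition $\lambda=\bigcup_{i<\mu}B_i$ with $B_i\cap c\in\F_c$ for every $c\in[\lambda]^\sigma$ and $i<\mu$. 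The key move is then a reflection/absoluteness step: since $\P=\Add(\omega,\lambda)$ is ccc and each $B_i$, each $\F_c$, and the decomposition are named by names involving $<\lambda=\mu^+$ many antichains, and since $\mu$ is a strong limit of countable cofinality, I would find a ``small'' set $S\in[\lambda]^{\mu}$ (really I expect to need $|S|$ of size $\mu$ or a bit less, chosen to absorb the relevant names) such that the restriction of the whole configuration to $S$ — the sets $B_i\cap S$, the families $\F_c$ for $c\subseteq S$, and the witnessing decomposition — already lives in $V[G\restriction S]$, an intermediate Cohen extension adding only $|S|$ reals.

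The heart of the argument, and what I expect to be the main obstacle, is the genuinely combinatorial clash inside a fixed model: I want to show that no single $\overline{\F}$ with the size bound $|\F_c|\le\theta<\mu$ can simultaneously capture $B_i\cap c$ for all $\mu$-many $i$ and all $c\in[\lambda]^\sigma$. Here is the counting I would try to make precise. Fix any $c\in[\lambda]^\sigma$. The map $i\mapsto B_i\cap c$ sends $\mu$ into $\F_c$, a set of size $\le\theta<\mu$, so by pigeonhole there is a set $I_c\in[\mu]^{\mu}$ on which $B_i\cap c$ is constant, say equal to some fixed $b_c\subseteq c$. Now I would vary $c$: consider an increasing chain $c_0\subsetneq c_1\subsetneq\cdots$ (or an increasing continuous tower of length $\cf(\mu)=\omega$, or of length up to $\mu$) of elements of $[\lambda]^{\sigma}$ whose union $c_*$ has size $\mu$ (legitimate since $\sigma<\mu$); the point is that for $\xi\in c_*$, membership ``$\xi\in B_i$'' is determined for all large enough $c_n\ni\xi$ by whether $\xi\in b_{c_n}$, and the coherence $b_{c_n}=b_{c_{n+1}}\cap c_n$ forces these $b_{c_n}$'s to fit together. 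The contradiction should come from the observation that the decomposition $\langle B_i\cap c_* : i<\mu\rangle$ of $c_*$ (a set of size $\mu$) must then, along a cofinal set of indices, stabilize to a single set $b_{c_*}$, i.e. $B_i\cap c_* = b_{c_*}$ for $\mu$-many $i$ — but those $\mu$-many $B_i$ are supposed to cover $c_*$, and if they all agree on $c_*$ they cover only $b_{c_*}$, not $c_*$, unless $b_{c_*}=c_*$; running this for a clever choice of $c_*$ (or using that we may also intersect with complements) yields the contradiction.

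To make the last paragraph airtight I would actually work the other way: rather than hoping a single $c_*$ works, I would use the Cohen forcing structure to choose a ``sufficiently generic'' $c$. Concretely, in $V^\P$ list the $\theta$ many members of $\F_c$ as $\{a^c_j : j<\theta\}$, and for the decomposition $\langle B_i : i<\mu\rangle$ of $\lambda$ note that for each $\xi<\lambda$ there is $i(\xi)<\mu$ with $\xi\in B_{i(\xi)}$. Now pick $c\in[\lambda]^\sigma$ with the property that $c$ meets $B_i$ for $\theta^+$ many distinct $i$'s — possible because each $B_i$ is nonempty-ish and there are $\mu>\theta^+$ of them partitioning $\lambda$, so by a $\Delta$-system / counting argument we can find $\sigma$-many points realizing $\theta^+$ distinct indices — and then the $\theta^+$ sets $B_i\cap c$ would have to be $\theta^+$ distinct members of $\F_c$, contradicting $|\F_c|\le\theta$. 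The only subtlety, and why strong-limit-of-countable-cofinality and $\lambda=\mu^+$ enter, is ensuring such a $c$ exists \emph{and} that the bound $|\F_c|\le\theta$ is the operative obstruction; I would verify this either by a direct cardinal computation in $V^\P$ (using that $\Add(\omega,\mu^+)$ is ccc, preserves cardinals, and makes $2^\sigma$ large — in fact $\ge\mu^+$ — so that there are genuinely more than $\theta$ ``types'' $B_i\cap c$ available along a suitable $c$) or by pushing the whole setup down to the intermediate model $V[G\restriction S]$ from the first paragraph and invoking a pure ZFC pigeonhole there. I expect the bookkeeping to reconcile the two size parameters $\sigma<\theta<\mu$ with the covering behavior of the decomposition to be the delicate point; everything else is routine.
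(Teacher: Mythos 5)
There is a genuine gap --- in fact several, and they sit exactly where the real work of the lemma lives. First, your pigeonhole step fails: from a map $i\mapsto B_i\cap c$ of $\mu$ into a set of size $\le\theta$ you cannot extract $I_c\in[\mu]^{\mu}$ on which the value is constant, because $\cf(\mu)=\omega\le\theta$, so $\mu$ can be partitioned into $\theta$ (indeed $\omega$) pieces each of size $<\mu$. Second, even granting such an $I_c$, the intended contradiction (``those $\mu$-many $B_i$ are supposed to cover $c_*$'') is not there: only the \emph{full} union $\bigcup_{i<\mu}B_i$ must equal $A$; a subfamily indexed by $I_{c_*}$ carries no covering obligation, so constancy of $B_i\cap c_*$ on $I_{c_*}$ contradicts nothing. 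Third, in your final paragraph you conflate ``$c$ meets $B_i$ for $\theta^+$ many $i$'' with ``the traces $B_i\cap c$ are $\theta^+$ many \emph{distinct} members of $\F_c$.'' Distinct indices can (and generically will) produce identical traces; producing $\theta^+$ pairwise distinct traces on a single small $c$ is precisely the hard content of the lemma, and you have deferred it to ``the only subtlety.''

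The structural reason your plan cannot be repaired as stated is the choice $A=\lambda$: that is a ground-model set, and nothing prevents a witness $\overline{\F}$ from handling all ground-model sets (cf.\ the proof for the small Cohen forcing, where $\F_c$ is built from ground-model subsets of a cover of $c$). The paper instead takes $\theta^+$ many \emph{generic} sets $\dot A_\xi=\{\alpha:\dot g(\theta^+\cdot\alpha+\xi)=1\}$ read off the Cohen function, fixes conditions $p_{\xi,\alpha}\Vdash\alpha\in\dot B_{\xi,i_{\xi,\alpha}}$, and then runs a long homogenization ($\Delta$-systems, stationary reflection on points of cofinality $\theta^{++}$, a decomposition of the root into countably many small pieces using $\cf(\mu)=\omega$) to reach a single $c$ and conditions $p_\xi$ such that one can \emph{extend} a condition to force $\beta\in\dot B_{\xi_1,i_{\xi_1}}$ while flipping the bit $\dot g(\theta^+\cdot\beta+\xi_2)$ to $0$, forcing $\beta\notin\dot A_{\xi_2}$ and hence $\beta\notin\dot B_{\xi_2,i}$ for every $i$. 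That genericity argument is what makes $\theta^+$ many traces on $c$ pairwise distinct, and it has no analogue when $A$ is a ground-model set. Your reflection to an intermediate extension $V[G\restriction S]$ also does not help, since the principle is not refuted by pure counting in any inner model; the failure is witnessed only against generic sets.
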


\begin{proof}
  %Let $G \subseteq \P$ be generic over $V$ but work for the moment in $V$ still. Again let $\sigma, \theta, \mu, \lambda$ be as in the statement of the lemma. Enumerate moreover the names for the generic Cohen reals by $\{\dot{\eta}_\alpha \; | \; \alpha \in \lambda\}$ where each $\dot{\eta}_\alpha$ is forced to be in $2^\omega$. We want to show that $\mathsf{Pr}(\sigma, \theta, \mu, \lambda)$ fails in $V[G]$ so suppose towards a contradiction that $\dot{\overline{\F}}$ is a name which is forced by the empty condition to be as in Definition \ref{maindef}. We need to produce a name for a subset of $[\lambda]^\lambda$ which cannot be decomposed as in the statement of Definition \ref{maindef}. Actually we will produce $\theta^+$ many sets and show by contradiction they cannot all be decomposed as we want.

  We argue by contradiction. By homogeneity of $\P$, we may assume that $\Vdash_\P \Pr(\sigma,\theta,\mu,\lambda)$. Let $\dot{\bar{\F}}$ witness this. We aim to produce a name for a $\lambda$-sized subset of $\lambda$ which cannot be decomposed as in Definition \ref{maindef}. Actually, we will produce $\theta^+$ many sets and show by contradiction they cannot all be decomposed as we want.

  Let $\dot g$ be a $\P$-name for the generic function $\lambda\to 2$. For $\xi < \theta^+$, let $\dot{A}_\xi$ be a name for the set $\{\alpha <\lambda:  \dot{g}(\theta^+\cdot\alpha + \xi)= 1\}$. An easy genericity argument confirms that each $\dot{A}_\xi$ is forced to have has size $\lambda$ and that they are all distinct. 
  
  Since we assume it is forced that $\mathsf{Pr}(\sigma, \theta, \mu, \lambda)$ holds we have names $\dot{B}_{\xi, i}$ for $\xi < \theta^+$ and $i < \mu$ so that
  \[
  \forces \dot{A}_\xi = \bigcup \{\dot{B}_{\xi, i} : i \in \mu\}
  \]
  Definition \ref{maindef}(C) holds. Our goal is to find a $c \in [\lambda]^\sigma$ such that $c\cap B_{\xi, i}$ are pairwise distinct for $\theta^+$ many pairs $(\xi, i)$. Since each of these must, by definition, be in $\dot{\overline{\F}}_c$, we will contradict the assumption that this set has size at most $\theta$. 

  To find this set of pairs $(\xi, i)$ we will repeatedly thin out the sets we have constructed to get more and more homogeneity. To begin this process, for each $\xi < \theta^+$ and $\alpha < \lambda$ we can pick $(p_{\xi, \alpha}, i_{\xi, \alpha})$ so that $i_{\xi,\alpha} < \mu$ and $p_{\xi, \alpha} \forces \check{\alpha} \in B_{\xi, i_{\xi, \alpha}}$. Note that any such $p_{\xi, \alpha}$ forces in particular that $\check{\alpha} \in \dot{A}_\xi$ and therefore we have that
 \begin{equation}\label{pxialpha}
\langle \theta^+\cdot\alpha + \xi, 1 \rangle \in p_{\xi, \alpha}.
 \end{equation}
Applying pigeonhole judiciously to this situation and further thinning out we can find moreover for each $\alpha < \lambda$ a set $u_\alpha \in [\theta^+]^{\theta^+}$ so that:
\begin{itemize}
\item the sequence $\vec{i}_\alpha = \seq{i_{\xi, \alpha} : \xi \in u_\alpha}$ is bounded in $\mu$;
\item $\langle \dom(p_{\xi, \alpha}) : \xi \in u_\alpha\rangle$ forms a $\Delta$-system;
\item $\langle p_{\xi,\alpha}:\xi\in u_\alpha\rangle$ are pairwise compatible with common intersection $r_\alpha$.
\end{itemize}
Fixing $\alpha$, note that at most finitely many $\xi \in u_\alpha$ can $\theta^+\cdot\alpha + \xi \in {\dom}(r_\alpha)$. By throwing out these finitely many elements we can assume that 
\begin{equation}\label{ralpha}    
\forall \alpha<\lambda \forall \xi \in u_\alpha\, [\theta^+\cdot\alpha+\xi\not\in{\dom}(r_\alpha)].
\end{equation}
  %for {\em no} $\xi \in u_\alpha$ is $\theta^+\cdot\alpha + \xi$ in the domain of $p_\alpha^*$. 

  \begin{claim}
      There are $X_* \in [\lambda]^\lambda$, $u_* \in [\theta^+]^{\theta^+}$ and $\vec{i}_* \in {}^{\theta^+}\mu$ so that for every $\alpha \in X_*$ we have that $u_\alpha = u_*$ and $\vec{i}_\alpha = \vec{i}_*$. 
  \end{claim}

  \begin{proof}[Proof of Claim]
      This is another pigeonhole argument. There are $(\theta^+)^{\theta^+}$ possibilities for $u_\alpha$ and $(\theta^+)^{\theta^+}<\mu<\lambda$ because $\mu$ is strong limit, so there exist $X\in [\lambda]^\lambda$ and $u_\ast\in [\theta^+]^{\theta^+}$ such that $u_\alpha=u_\ast$ for all $\alpha\in X$. Now, if $\alpha\in X$, $\vec{i}_\alpha$ is bounded in $\mu$, so we can find $\nu_\alpha<\mu$ such that $i_{\xi,\alpha}<\nu_\alpha$ for all $\xi\in u_*$. Shrinking $X$ if necessary, we may find $\nu<\mu$ such that $\nu_\alpha=\nu$ for all $\alpha\in X$. But now there are $\nu^{\theta^+}<\mu$ many possibilities for $\vec{i}_\alpha$, so we can now find $X_\ast\in [X]^\lambda$ and $\vec{i}_\ast$ such that $\vec{i}_\alpha=\vec{i}$ for all $\alpha\in X_\ast$.
  \end{proof}    
      %Note that we have $2^\theta <\mu^{<\mu} = \mu < \lambda = {\rm cf}(\lambda)$ there are less than $\lambda$ many possibilities for $(u_\alpha, i_\alpha)$ so $\lambda$-many of them must be the same.

 Next we aim for even more homogeneity. Let $S:=\{\alpha\in [\mu,\lambda):\cf(\alpha)=\theta^{++}\}$. For each $\delta \in S$, set $\beta_\delta:=\min(X_*\setminus \delta)$.

 \begin{claim}
     For each $\delta \in S$ there is a $\gamma_\delta \in [\mu,\delta)$ so that, if $\xi \in u_{*}$, then $p_{\xi, \beta_\delta} \hook \delta = p_{\xi, \beta_\delta}\hook \gamma_\delta$.
 \end{claim}

 \begin{proof}[Proof of Claim]
     By the finiteness of the domains and the fact that $\delta$ is a limit ordinal there is always a $\gamma_{\delta, \xi} < \delta$ with the property that $p_{\xi, \beta_\delta} \hook \delta = p_{\xi, \beta_\delta}\hook \gamma_{\delta, \xi}$. The point is that now since $\delta$ has cofinality $\theta^{++}$ and the $\xi$'s range over a subset of $\theta^+$ there is a $\gamma_\delta = {\sup}_{\xi \in u_{*}} \gamma_{\delta, \xi} < \delta$.
 \end{proof}
 
 Since $\delta \mapsto \gamma_\delta$ is a regressive function on $S$ we can find a stationary set $S_* \subseteq S$ with the property that for all $\delta \in S_*$ we have $\gamma_\delta = \gamma_*$ for some fixed $\gamma_*$. Note that $|\gamma_*| = \mu$. Now for each $\alpha \in S_*$ consider the function $\alpha \mapsto {\sup}[\bigcup \{{\dom}(p_{\xi, \beta_\alpha}) : \xi \in u_*\}]$. We can find a club $E$ bounding this function - i.e. for every $\alpha \in S_*$ we have
 \begin{equation}\label{Eclub}
{\sup}\left[\bigcup \{{\dom}(p_{\xi, \beta_\alpha}) : \xi \in u_*\}\right] < {\rm min}[ E \setminus (\alpha + 1)].
 \end{equation}
 Let $X_{**}$ be the set $\{\beta_\delta : \delta \in S_* \cap E\}$. %Let us take stock of where we are now. SOMETHING HERE ABOUT THE INTERSECTION OF SUPPORTS BOUNDED BY GAMMA*

 \begin{claim} \label{claimaboutsupports}
     Let $\delta,\epsilon\in S_\ast\cap E$ with $\delta<\epsilon$. If $\beta_\delta,\beta_\epsilon$ are distinct elements of $X_{**}$ then we have that \[\bigcup \{{\dom}(p_{\xi, \beta_\delta}) : \xi \in u_{*}\} \cap \bigcup \{{\dom}(p_{\xi, \beta_\epsilon}) : \xi \in u_{*}\} \subseteq \gamma^*.\] 
 \end{claim}

 \begin{proof}
    Fix $\xi,\eta\in u_\ast$. By \eqref{Eclub}, $\dom(p_{\xi\beta_\delta})\subseteq \epsilon$, hence
\begin{align*}
\dom(p_{\xi,\beta_\delta})\cap{\dom}(p_{\eta,\beta_\epsilon})& ={\dom}(p_{\xi,\beta_\delta})\cap \dom(p_{\eta,\beta_\epsilon})\cap \epsilon \\
& = \dom(p_{\xi,\beta_\delta})\cap \dom(p_{\xi,\beta_\epsilon}) \cap \gamma_*,\\
& \subseteq \gamma^*,
\end{align*}
where the second equality is by the choice of $\gamma_*$.
\end{proof}

     %By the last line before the statement of the claim we have that if $\alpha < \beta \in S_* \cap E$ and ${\dom}(p_{\xi, \beta_\alpha}) \subseteq \beta$ then for every $\xi' \in u_*$ we have ${\dom}(p_{\xi, \beta_\alpha}) \cap {\dom} (p_{\xi', \beta_\beta}) = {\dom}(p_{\xi, \beta_\alpha}) \cap \beta \cap {\dom} (p_{\xi', \beta_\beta}) \cap \beta$ (trivially) but by the choice of $\gamma^*$ then we have ${\dom} (p_{\xi', \beta_\beta}) \cap \beta = {\dom} (p_{\xi', \beta_\beta}) \cap \gamma^*$ so ${\dom}(p_{\xi, \beta_\alpha}) \cap {\dom} (p_{\xi', \beta_\beta}) = {\dom}(p_{\xi, \beta_\alpha}) \cap {\dom} (p_{\xi', \beta_\beta}) \cap \gamma^*$ which of course is contained in $\gamma^*$.

  Let $\langle W_n : n < \omega\rangle$ be a $\subseteq$-increasing sequence of sets of size ${<}\mu$ so that $\bigcup_{n < \omega} W_n = \gamma^*$. For $\delta \in E \cap S_*$ and $\xi \in u_*$ there is an $n_{\delta, \xi} \in \omega$ so that
  \begin{equation}\label{W}
      {\dom}(p_{\xi, \beta_\delta}) \cap \gamma_* \subseteq W_{n_{\delta, \xi}}
  \end{equation}
  simply by the finiteness of the domains. But this implies that for every $\delta \in S_* \cap E$ there is a $u_{*, \delta} \in [u_*]^{\theta^+}$ so that $n_{\delta, \xi} = n_\delta$ for some fixed $n_\delta \in \omega$ and every $\xi \in u_{*, \delta}$. By a pigeonhole argument, we can find a stationary set $S_{**} \subseteq S_{*} \cap E$, together with $n\in\omega$, $u_{**}\in [u_*]^{\theta^+}$ and a family $\seq{p_\xi:\xi\in u_{**}}$ so that, for every $\delta \in S_{**}$, the following hold: 
  \begin{itemize}
    \item $n_\delta = n$;
    \item $u_{*, \delta} = u_{**}$;
    \item $\seq{p_{\xi, \beta_\delta} \hook W_n : \xi \in u_{**}} = \seq{p_\xi : \xi \in u_{**}}$.
  \end{itemize}
   %for some fixed $n$ and  for some fixed $u_{**}$ and a family of conditions $\{p_\xi : \xi \in u_{**}\}$ so that $\seq{p_{\xi, \beta_\delta} \hook W_n : \xi \in u_{**}} = \seq{p_\xi : \xi \in u_{**}}$.

  Reaching back to the beginning of the proof we can observe that $\seq{p_\xi : \xi \in u_{**}}$ forms a $\Delta$-system by merit of being a family of restrictions of $\langle p_{\xi, \alpha} : \xi \in u_*\rangle$ which was constructed. Therefore, we can find a finite set $y \subseteq W_n$ and $p^*$ so that for distinct $\xi_1, \xi_2 \in u_{**}$ we have ${\dom}(p_{\xi_1}) \cap {\dom}(p_{\xi_2}) = y$ and $p_{\xi_1} \hook y = p_{\xi_2} \hook y = p^*$. Let $c \subseteq \{\beta_\delta : \delta \in S_{**}\}$ be of size $\sigma$. Let $\dot{x}_\xi$ be a name for $\dot{B}_{\xi, i_\xi} \cap \check{c}$. By the assumption on the $\dot{B}_{\xi, i}$'s and the formulation of Definition \ref{maindef} we must have that $\forces \dot{x}_\xi \in \dot{\overline{\F}}_{\check{c}}$ for every $\xi \in u_{**}$.

We will now contradict (B) by showing that:
  
  \begin{claim}
      $p^*$ forces that there is a set $u_{\dagger} \in [u_{**}]^{\theta^+}$ so that $\dot{x}_{\xi_1} \neq \dot{x}_{\xi_2}$ for every pair of distinct $\xi_1, \xi_2 \in u_{\dagger}$. 
  \end{claim}

\begin{proof}[Proof of Claim]
Let $\dot{u}_{\dagger}$ be the name for the set of $\xi \in u_{**}$ so that $p_{\xi} \in \dot{G}$. This set is forced by $p^*$ to have size $\theta^+$. To see this, observe that otherwise some $q \leq p^*$ forces it to have size at most $\theta$ so fix such a $q$ and let $\delta < \theta^+$ be an ordinal so that $q \forces \dot{u}_{\dagger} \subseteq u_{**} \cap \delta$. Fix an $\xi \in u_{**}$. Since $q$ strengthens $p^*$, if $p_\xi$ and $q$ are incompatible, there is an $\eta \notin y$ so that $p_\xi(\eta)\neq q(\eta)$ and in particular their domains intersect outside of $y$. Since the domains of the $p_\xi$'s however form a $\Delta$-system there is at most one such $\xi$ for $\eta \in \dom(q) \setminus y$. As $q$ has finite domain, there are cofinitely many $\xi\in u_{**}$ so that $p_\xi$ and $q$ are compatible, so in particular there is one such $\xi$ with $\xi>\delta$. But now if $r \leq p_\xi, q$ then $r \forces \xi \in \dot{u}_{\dagger} \setminus \delta$, which is absurd.

Now, towards a contradiction suppose that $\xi_1 \neq \xi_2 \in u_{**}$ and let $p' \leq p^*$ force that $\xi_1, \xi_2 \in \dot{u}_{\dagger}$ but $\dot x_{\xi_1} = \dot{x}_{\xi_2}$. By Claim \ref{claimaboutsupports} ,for each $\zeta\in\dom(p')\setminus \gamma_\ast$, there exists at most one $\beta\in c$ such that $\zeta\in \dom(p_{\xi_1,\beta})$. Since $c$ is infinite and $\dom(p')$ is finite, it follows that we can find some $\beta\in c$ such that
\begin{equation}\label{abovegamma}
\forall \zeta\in\dom(p')\setminus \gamma_\ast [\zeta\not\in\dom(p_{\xi_1,\beta})]
\end{equation}
and
\begin{equation}\label{outdomp}
\theta^+\cdot\beta+\xi_2\not\in\dom(p').
\end{equation}

%Observe that for any $\beta_\delta \in c$ if $\gamma \in \dom (p_{\xi_1, \beta_\delta}) \cap \dom(p')$ then either $\gamma \leq \gamma^*$ and hence $\gamma \in \dom p_{\xi_1}$ by the way $p_{\xi_1}$ was defined so therefore $p_{\xi_1, \beta_\delta}(\gamma)$ and $p'(\gamma)$ are compatible or $\gamma > \gamma^*$, hence $\beta_\delta$ is unique so that $\gamma \in \dom p_{\xi_1}$ by Claim \ref{claimaboutsupports}. Since $c$ is infinite but the domain of $p'$ is finite, there is only therefore a finite number of elements $\beta_\delta$ of $c$ so that $\dom (p_{\xi_1, \beta_\delta}) \cap \dom(p')$ contains an element outside of $y$. Let $c' \subseteq c$ be the cofinite set of elements not like this. 

%Now since $c'$ is infinite we can find moreover a $\beta_\delta \in c'$ so that $\theta^+\cdot\beta_\delta+\xi_2\not \in\dom(p')$. 

By \eqref{pxialpha},  recall that we have that $\theta^+\cdot\beta+\xi_2\in \dom(p_{\xi_2,\beta})$. On the other hand, by \eqref{ralpha},
\[
\theta^+\cdot\beta +\xi_2\not\in \dom(r_{\beta})=\dom(p_{\xi_1\beta})\cap \dom(p_{\xi_2\beta}),
\]
and therefore $\theta^+\cdot\beta+\xi_2\not\in\dom(p_{\xi_1\beta})$. Define
\[
r:= p' \cup [p_{\xi_1, \beta} \restr ({\dom}(p_{\xi_1, \beta}) \setminus {\dom}(p^*))] \cup \{\langle \theta^+\cdot \beta + \xi_2, 0\rangle \}
\]
We need to argue that $r$ is a condition, the issue being the compatibility of $p'$ and $p_{\xi_1,\beta}$. Fix $\zeta\in \dom(p')\cap\dom(p_{\xi_1,\beta})$. By \eqref{abovegamma}, $\zeta<\gamma^*$, hence $\zeta\in W_n$ by \eqref{W}, But $p_{\xi_1,\beta}\restr W_n=p_{\xi_1}$ and $p'\Vdash \xi_1\in u_{\dagger}$, so $p'$ and $p_{\xi_1}$ are compatible, therefore $p_{\xi_1,\beta}(\zeta)=p'(\zeta)$, as desired.

We have shown that $r$ is a condition extending $p_{\xi_1, \beta}$, so $r\Vdash \beta\in B_{\xi_1,i_{\xi_1}}$ by the choice of $p_{\xi_1,\beta}$. On the other hand, $r\Vdash \beta \notin \dot A_{\xi_2}$ because $\langle \theta^+\cdot \beta + \xi_2, 0\rangle$, and therefore $r\Vdash \beta \notin \dot{B}_{\xi_2, i}$ for {\em any} $i < \mu$. This contradicts the assumption that $p'$ forces $\dot x_{\xi_1} = \dot x_{\xi_2}$, thus proving the claim.
\end{proof}

By the last claim, $p^*$ forces that $\dot{\overline{\F}}_{\check{c}}$ has size at least $\theta^+$, so we have reached our final, promised, contradiction so we are done. 
\end{proof}
\begin{remark}
We note that in the above we really only needed that $\cf(
\mu) < \theta^+$ to run the argument. Under this assumption, one writes $\gamma_\ast=\bigcup_{j<\nu} W_j$ for some regular $\nu\le \theta$. The proof then proceeds the same, \textit{mutatis mutandis}.
\end{remark}

The principle $\mathsf{Pr}(\sigma, \theta, \mu, \lambda)$ is similar - though not evidently the same as that considered in \cite{Sh:108}, see also \cite{Sh:88a} for a more complete treatment. In \cite{Sh:108} principles of this form are shown to imply several mathematical statements involving e.g. almost free groups. It would be interesting to know whether the same holds here.
\begin{question}
What applications does $\mathsf{Pr}(\sigma, \theta, \mu, \lambda)$ have to other types of structures? Does a statement of this form distinguish adding $\aleph_\omega$ from adding $\aleph_{\omega+1}$ -many Cohen reals?
\end{question}

More generally it would be nice to know more about the principle $\mathsf{Pr}(\sigma, \theta, \mu, \lambda)$. On this note we ask the following.

\begin{question}
What consequences more generally does $\mathsf{Pr} (\sigma, \theta, \mu, \lambda)$ have for the choices of $\sigma$, $\theta$, $\mu$ and $\lambda$ considered in this paper? For example, could it be equivalent to some partition principle or its negation?
\end{question}

We note that our analysis above does not allow us to separate these principles when the final coordinate is different. Therefore we ask the following.

\begin{question}
Is it consistent that $\Pr(\sigma,\theta,\mu,\lambda_0) \wedge \neg \Pr(\sigma,\theta,\mu,\lambda_1)$ holds for some $\lambda_0<\lambda_1$.
\end{question}

Finally along these lines we also ask, 

\begin{question}
Can $\Pr$ exhibit any incompactness? For example, is it consistent that $\Pr(\aleph_0,\aleph_1,\aleph_2,\aleph_n)$ holds for every $n>2$ but $\Pr(\aleph_0,\aleph_1,\aleph_2,\aleph_\omega)$ fails?
\end{question}

\bibliographystyle{plain}
\bibliography{shlhetal,extra_refs}

\end{document}